\date{\today}
\newtheorem{theorem}{Theorem}[section]
\newtheorem{corollary}[theorem]{Corollary}
\newtheorem{lemma}[theorem]{Lemma}
\newtheorem{proposition}[theorem]{Proposition}
\newtheorem{definition-proposition}[theorem]{Definition-Proposition}
\newtheorem{definition-lemma}[theorem]{Definition-Lemma}
\newtheorem{problem}[theorem]{Problem}
\theoremstyle{definition}
\newtheorem{definition}[theorem]{Definition}
\newtheorem{example}[theorem]{Example}
\newtheorem{caution}[theorem]{Caution}
\renewcommand{\AA}{\mathcal{A}}
\newcommand{\CC}{\mathcal{C}}
\newcommand{\KKK}{\mathsf{K}}
\newcommand{\FF}{\mathcal{F}}
\newcommand{\LL}{\mathcal{L}}
\renewcommand{\SS}{\mathcal{S}}
\newcommand{\TT}{\mathcal{T}}
\newcommand{\WW}{\mathcal{W}}
\newcommand{\XX}{\mathcal{X}}
\newcommand{\Z}{\mathbb{Z}}
\newcommand{\R}{\mathbb{R}}
\newcommand{\bo}{\operatorname{b}\nolimits}
\DeclareMathOperator{\Hom}{\operatorname{Hom}}
\newcommand{\End}{\operatorname{End}\nolimits}
\newcommand{\RHom}{\mathbf{R}\strut\kern-.2em\operatorname{Hom}\nolimits}
\newcommand{\rmt}{\mathrm{t}}
\newcommand{\rmw}{\mathrm{w}}
\newcommand{\rmf}{\mathrm{f}}
\newcommand{\rmN}{\mathrm{N}}
\DeclareMathOperator{\moduleCategory}{\mathsf{mod}} \renewcommand{\mod}{\moduleCategory}
\DeclareMathOperator{\proj}{\mathsf{proj}}
\DeclareMathOperator{\inj}{\mathsf{inj}}
\DeclareMathOperator{\Sub}{\mathsf{Sub}}
\DeclareMathOperator{\simple}{\mathsf{sim}}
\DeclareMathOperator{\TF}{\mathsf{TF}}
\DeclareMathOperator{\conv}{\mathsf{conv}}
\DeclareMathOperator{\twosilt}{2\mathsf{-silt}}
\DeclareMathOperator{\twopresilt}{2\mathsf{-psilt}}
\DeclareMathOperator{\supp}{\mathsf{supp}}
\newcommand{\cut}{\ar@{-}@[|(5)]}
\DeclareMathOperator{\brick}{\mathsf{brick}}
\DeclareMathOperator{\Filt}{\mathsf{Filt}}
\DeclareMathOperator{\Fac}{\mathsf{Fac}}
\DeclareMathOperator{\Facet}{\mathsf{Facet}}
\DeclareMathOperator{\Face}{\mathsf{Face}}
\renewcommand{\epsilon}{\varepsilon}
\numberwithin{equation}{section}
\begin{document}
\title{$M$-TF equivalences on the real Grothendieck groups}

\author{Sota Asai} 
\address{Sota Asai: Graduate School of Mathematical Sciences,
University of Tokyo,  
3-8-1 Komaba, Meguro-ku, Tokyo-to, 153-8914, Japan}
\email{sotaasai@g.ecc.u-tokyo.ac.jp}

\author{Osamu Iyama}
\address{Osamu Iyama: Graduate School of Mathematical Sciences,
University of Tokyo,  
3-8-1 Komaba, Meguro-ku, Tokyo-to, 153-8914, Japan}
\email{iyama@ms.u-tokyo.ac.jp}

\begin{abstract}
For an abelian length category $\AA$ 
with only finitely many isoclasses of simple objects,
we have the wall-chamber structure and the TF equivalence 
on the dual real Grothendieck group 
$K_0(\AA)_\R^*=\Hom_\R(K_0(\AA)_\R,\R)$, 
which are defined by semistable subcategories 
and semistable torsion pairs in $\AA$ 
associated to elements $\theta \in K_0(\AA)_\R^*$.
In this paper, we introduce the $M$-TF equivalence 
for each object $M \in \AA$ 
as a systematic way to coarsen the TF equivalence.
We show that the set $\Sigma(M)$ of closures 
of $M$-TF equivalence classes
is a rational generalized fan in $K_0(\AA)_\R^*$
which is finite and complete. 
More precisely, we show that $\Sigma(M)$ is 
the normal generalized fan of the Newton polytope $\rmN(M)$
in $K_0(\AA)_\R$. When $\AA$ is the category of finitely generated modules over a finite dimensional algebra $A$, $\Sigma(M)$ can be regarded as a completion of a certain coarsening of the $g$-fan of $A$.
\end{abstract}

\maketitle 
\setcounter{tocdepth}{1}
\tableofcontents

\section{Introduction}

\subsection{Background}
The recent developments in representation theory of algebras revealed the hidden categorical/combinatorial structure of tilting theory \cite{AIR,BY,IT,KY}.
For each finite dimensional algebra $A$, there exists a non-singular fan $\Sigma_A$ (called the \emph{$g$-fan}) in the real Grothendieck group $K_0(\proj A)_\R$, whose maximal cones correspond bijectively with the isomorphism classes of basic 2-term silting complexes of $A$ \cite{AHIKM}.
Regarding elements of $K_0(\proj A)_\R$ as stability conditions \cite{K,BKT}, the first author \cite{A} introduced a certain equivalence relation on $K_0(\proj A)_\R$ called the \emph{TF equivalence}. He proved that the full dimensional TF equivalence classes are exactly the interiors of the maximal cones of $\Sigma_A$ \cite[Theorem 3.17]{A}. Thus TF equivalence classes can be regarded as a completion of the $g$-fan.
Moreover, the authors proved in \cite[Theorems 1.1 and 1.3]{AsI} (see also \cite{HYa1,HYa2,HYu}) that there is a non-trivial connection between TF equivalence classes and Kac-type canonical decompositions of elements of $K_0(\proj A)_\R$ \cite{Kac,CS,DF,P}.
Moreover, a number of recent articles studied fans appearing in representation theory, e.g.\  \cite{AFP+,BPPW,BCD+,Kai,IW,PPPP,Sh}.

It is in general hard to give an explicit description of the TF equivalence classes.
Since each TF equivalence class $E$ is clearly closed under addition and multiplication by positive scalars,
the following problem is naturally posed.

\begin{problem}\label{Prob_TF}
Let $A$ be a finite dimensional algebra, and $E$ a TF equivalence class.
\begin{enumerate}[\rm(a)] 
\item Is the closure $\overline{E}$ of $E$ a polyhedral cone? Is $\overline{E}$ simplicial?
\item Is $E$ open in $\overline{E}$?
\end{enumerate}
\end{problem}

Problem \ref{Prob_TF} is known to be true if one of the following conditions is satisfied.
\begin{enumerate}[$\bullet$]
\item (Proposition \ref{silting TF}) $E$ is contained in the $g$-fan.
\item \cite[Theorem 1.3]{AsI} $E$ contains an element in $K_0(\proj A)$, and $A$ is either $E$-tame or hereditary.
\end{enumerate}
However Problem \ref{Prob_TF} remains open in full generality.

The aim of this paper is to introduce a family of coarsening of the TF equivalence called \emph{$M$-TF equivalences} on $K_0(\proj A)_\R$, which give rise to complete generalized fans. The original TF equivalence can be regarded as a limit of $M$-TF equivalences for finitely generated $A$-modules $M \in \mod A$ (Proposition \ref{M-TF indec}(b)). In particular, if $A$ is $g$-finite, then the TF equivalence coincides with $M$-TF equivalence for some $M$ (Proposition \ref{M-TF brick all}).
We explain our results in more detail in the next subsection.

\subsection{Our results}
\emph{Throughout this paper, let $\AA$ be an abelian length category 
with only finitely many isoclasses $S(1),\ldots,S(n)$ of simple objects.}
The objects $S(1),\ldots,S(n)$ give a canonical basis
$[S(1)],\ldots,[S(n)]$ of 
the \emph{real Grothendieck group} $K_0(\AA)_\R$,
so $K_0(\AA)_\R=\bigoplus_{i=1}^n \R[S(i)]$ is 
naturally identified with the Euclidean space $\R^n$.
We also consider the dual real Grothendieck group
$K_0(\AA)_\R^*=\Hom_\R(K_0(\AA),\R)$,
on which we mainly focus in this paper rather than $K_0(\AA)_\R$.
Each element $\theta \in K_0(\AA)_\R^*$ is just an $\R$-linear form 
$\theta \colon K_0(\AA)_\R \to \R$.
Thus for any $M \in \AA$, we get a real number $\theta(M) \in \R$.
This enables us to define several subcategories of $\AA$
and subsets of $K_0(\AA)_\R^*$.

For each $\theta \in K_0(\AA)_\R^*$,
an object $M \in \AA$ is said to be \emph{$\theta$-semistable}
if $\theta(M)=0$ and 
each factor object $N$ of $M$ satisfies the linear inequality 
$\theta(N) \ge 0$.
These \emph{stability conditions} were 
originally introduced by King \cite{K} 
to characterize a condition 
in geometric invariant theory of quiver representations.
The \emph{$\theta$-semistable subcategory} $\WW_\theta \subset \AA$ 
consisting of all $\theta$-semistable objects 
is an abelian category of finite length \cite{Rudakov,HR}.
Later, Baumann-Kamnitzer-Tingley \cite{BKT} associated two \emph{semistable torsion pairs}
$(\overline{\TT}_\theta,\FF_\theta)$ 
and $(\TT_\theta,\overline{\FF}_\theta)$
to each $\theta \in K_0(\AA)_\R^*$
so that $\TT_\theta \subset \overline{\TT}_\theta$,
$\FF_\theta \subset \overline{\FF}_\theta$ and 
$\WW_\theta=\overline{\TT}_\theta \cap \overline{\FF}_\theta$,
motivated by the study of the canonical bases of quantum groups via preprojective algebras.

For each nonzero object $M \in \AA$,
Bridgeland \cite{Bridgeland} and Br\"ustle-Smith-Treffinger \cite{BST}
defined the rational polyhedral cone $\Theta_M \subset K_0(\AA)_\R^*$
consisting of all $\theta \in K_0(\AA)_\R^*$ 
such that $M$ are $\theta$-semistable.
By considering the subsets $\Theta_M$ 
for all nonzero objects $M$ as walls,
\cite{BST,Bridgeland} introduced the wall-chamber structure
(or the stablity scattering diagram)
in the Euclidean space $K_0(\AA)_\R^*$.
Then they and Yurikusa \cite{Y} showed that
this wall-chamber structure is strongly related 
to the $g$-vector fan in cluster theory \cite{FZ}
and $\tau$-tilting theory \cite{AIR,DIJ}.

To study this wall-chamber structure more,
the first named author \cite{A} defined the \emph{TF equivalence}
as an equivalence relation on $K_0(\AA)_\R^*$
so that $\theta$ and $\eta$ are TF equivalent 
if $(\overline{\TT}_\theta,\FF_\theta)=(\overline{\TT}_\eta,\FF_\eta)$ and 
$(\TT_\theta,\overline{\FF}_\theta)=(\TT_\eta,\overline{\FF}_\eta)$.
Two different points $\theta$ and $\eta$ are TF equivalent
if and only if the line segment $[\theta,\eta]$ 
does not cross any wall transversely;
see Proposition \ref{not TF stable} for details (also \cite[Theorem 2.17]{A}).
In general, the wall-chamber structure and the TF equivalence on 
$K_0(\AA)_\R^*$ often become very complicated,
but in many situations, 
it is enough to get only important information on TF equivalences
depending on purposes.

Therefore in this paper, we introduce
the \emph{$M$-TF equivalence} on $K_0(\AA)_\R^*$ 
for each object $M \in \AA$
as a systematic way to coarsen the TF equivalence.
For $M \in \AA$ and $\theta \in K_0(\AA)_\R^*$,
the semistable torsion pairs 
$(\overline{\TT}_\theta,\FF_\theta)$ and 
$(\TT_\theta,\overline{\FF}_\theta)$
give the \emph{canonical sequences}
\begin{align*}
&0 \to \overline{\rmt}_\theta M \to M  \to \rmf_\theta M \to 0
\quad (\overline{\rmt}_\theta M \in \overline{\TT}_\theta, \ 
\rmf_\theta M \in \FF_\theta), \\
&0 \to \rmt_\theta M \to M \to \overline{\rmf}_\theta M \to 0
\quad (\rmt_\theta M \in \TT_\theta, \ 
\overline{\rmf}_\theta M \in \overline{\FF}_\theta)
\end{align*}
with $\rmt_\theta M \subset 
\overline{\rmt}_\theta M \subset M$ subobjects.
The subfactor $\rmw_\theta M:=\overline{\rmt}_\theta M/\rmt_\theta M$
is in $\WW_\theta$.
On the other hand, for any object $N \in \WW_\theta$, we denote by $\supp_\theta N$ the set of isoclasses of composition factors of $N$ in $\WW_\theta$. 
Then we introduce the $M$-TF equivalence as follows.

\begin{definition}[Definition \ref{define M-TF}]\label{define M-TF intro}
Let $M \in \AA$. 
\begin{enumerate}[\rm (a)]
\item
Let $\theta,\eta \in K_0(\AA)_\R^*$.
Then we say that $\theta$ and $\eta$ are 
\emph{$M$-TF equivalent} if the following conditions are satisfied.
\begin{enumerate}[\rm (i)]
\item
The conditions
$\rmt_\theta M=\rmt_\eta M$, $\rmw_\theta M=\rmw_\eta M$ and 
$\rmf_\theta M=\rmf_\eta M$ hold, or equivalently, 
$(\overline{\rmt}_\theta M,\rmf_\theta M)=
(\overline{\rmt}_\eta M,\rmf_\eta M)$ and 
$(\rmt_\theta M,\overline{\rmf}_\theta M)=
(\rmt_\eta M,\overline{\rmf}_\eta M)$ hold.
\item
We have $\supp_\theta(\rmw_\theta M)=\supp_\eta(\rmw_\eta M)$.
Under the condition (i), this is equivalent to that
the subobjects of $\rmw_\theta M$ in $\WW_\theta$ are 
precisely the subobjects of $\rmw_\eta M$ in $\WW_\eta$.
\end{enumerate}
\item
We define $\TF(M)$ as the set of all $M$-TF equivalence classes, and 
\begin{align*}
\Sigma(M):=\{ \overline{E} \mid E \in \TF(M) \}
\end{align*}
as the set of the closures of all $E \in \TF(M)$.
\end{enumerate}
\end{definition}

By definition, $\theta$ and $\eta$ are TF equivalent
if and only if they are $M$-TF equivalent for all objects $M \in \AA$.
When $\AA$ is the category $\mod A$ of finitely generated modules over a finite dimensional algebra $A$, $\Sigma(M)$ can be regarded as a completion of a certain coarsening of the $g$-fan of $A$;
see \cite{AHIKM}.
For the notion of coarsening of fans, we refer to \cite{AHIKM}.

\begin{proposition}
(Proposition \ref{connection to g-fan})
Let $A$ be a finite dimensional algebra and $M\in\mod A$. Then $\Sigma(M)$ is a completion of the coarsening of the $g$-fan $\Sigma_A$ with respect to the equivalence relation $\sim_M$.
\end{proposition}

In this paper, we use the notion of a \emph{generalized fan} 
(see Definition \ref{define fan}) \cite{CLS}, 
which is more general than the usual notion of a fan, 
since the cones are not assumed to be strongly convex. 

The following is the main result of this paper,
where $\sigma^\circ$ denotes the \emph{relative interior} of $\sigma$, 
that is, the interior of $\sigma$
as a subset of the $\R$-vector space $\R \sigma$ spanned by $\sigma$.

\begin{theorem}[Theorems \ref{TF(M) Sigma(M)}, \ref{Sigma(M)=Sigma(N(M))}(a)
and Corollary \ref{M-TF decompose}(b)]
\label{Sigma(M)=Sigma(N(M)) intro 1}
Let $M \in \AA$.
\begin{enumerate}[\rm (a)]
\item
The set $\Sigma(M)$ is a rational generalized fan in $K_0(\AA)_\R^*$ which is finite and complete. Moreover,
we have mutually inverse bijections 
\begin{align*}
\TF(M) \simeq \Sigma(M)
\end{align*}
given by $E \mapsto \overline{E}$ for any $E \in \TF(M)$
and $\sigma \mapsto \sigma^\circ$ for any $\sigma \in \Sigma(M)$.
\item
Let $\sigma \in \Sigma(M)$.
Then the face decomposition
\begin{align*}
\sigma=\bigsqcup_{\tau \in \Face \sigma}\tau^\circ
\end{align*}
is also the decomposition to $M$-TF equivalence classes.
\end{enumerate}
\end{theorem}

We remark that the condition (ii) $\supp_\theta M=\supp_\eta M$ 
in Definition \ref{define M-TF intro}(a) 
is necessary to make $\Sigma(M)$ a generalized fan; see Example \ref{A2}.

To prove Theorem \ref{Sigma(M)=Sigma(N(M)) intro 1}, 
we need the Newton polytope $\rmN(M)$ \cite{BKT,Fei1}
in the ordinary real Grothendieck group $K_0(\AA)_\R$,
which is given as follows.

\begin{definition}[Definition \ref{define newton}]
Let $M \in \AA$.
The \emph{Newton polytope} $\rmN(M)$ is defined as
\begin{align*}
\rmN(M):=
\conv \{[X] \mid \text{$X$ is a subobject of $M$}\} \subset K_0(\AA)_\R.
\end{align*}
\end{definition} 

In this paper, 
we show that $\Sigma(M)$ is the normal generalized fan of $\rmN(M)$.
In general, the normal generalized fan of a polytope in $\R^n$
is the generalized fan defined in the following way.

\begin{definition-proposition}[Definition-Proposition \ref{Sigma(P)}]
For a polytope $P$ in $V:=\R^n$, we denote by $\Sigma(P)$ the normal generalized fan of $P$ in $V^*=\Hom_\R(V,\R)$. 
Each $\theta\in V^*$ gives a face of $P$:
\begin{align*}
P_\theta:=\{v\in P\mid\theta(v)=\max\theta(P)\}.
\end{align*}
Each face $F$ of $P$ gives a polyhedral cone in $\Sigma(P)$:
\begin{align*}
\sigma_F:=\{\theta\in V^*\mid F\subset P_\theta\}.
\end{align*}
These correspondences induce bijections between $\Sigma(P)$
and the set $\Face P$ of (nonempty) faces of $P$.
\end{definition-proposition}

Under these preparations, we can state the following result.
This can be regarded as a generalization of \cite[Theorem 5.23]{AHIKM}
(see also \cite[Theorem 5.17, Corollary 7.9]{Fei2}),
and implies Theorem \ref{Sigma(M)=Sigma(N(M)) intro 1}.

\begin{theorem}[Theorem \ref{Sigma(M)=Sigma(N(M))}]
\label{Sigma(M)=Sigma(N(M)) intro 2}
Let $M \in \AA$.
\begin{enumerate}[\rm (a)]
\item
The set $\Sigma(M)$ coincides with the normal generalized fan $\Sigma(\rmN(M))$.
\item
There exist mutually inverse order-reversing bijections
\begin{align*}
\Face \rmN(M) \leftrightarrow \Sigma(M)
\end{align*}
given by $F \mapsto \sigma_F$ for each $F \in \Face \rmN(M)$ and 
$\sigma \mapsto \rmN(M)_\theta$ for each $\sigma \in \Sigma(M)$ 
taking $\theta \in \sigma^\circ$.
Moreover each $F \in \Face \rmN(M)$ satisfies
$\dim_\R \sigma_F=n-\dim_\R F$. 
\end{enumerate}
\end{theorem}

Since $\Sigma(M)$ is finite and complete,
maximal cones in $\Sigma(M)$ are precisely
$n$-dimensional cones in $\Sigma(M)$.
Thus we write $\Sigma_n(M)$ 
for the set of all maximal cones in $\Sigma(M)$,
which is in bijection with the set $\Face_0 \rmN(M)$
of all vertices in $\Sigma(M)$ 
by Theorem \ref{Sigma(M)=Sigma(N(M)) intro 2}(b).
We apply our results to study the set $\Facet \sigma$
of \emph{facets} of each $\sigma \in \Sigma_n(M)$.
Below we introduce a decomposition
$\Facet \sigma=\Facet^+ \sigma \sqcup \Facet^- \sigma$
in terms of the bijections $\Sigma(M) \leftrightarrow \Face \rmN(M)$.

Let $\sigma \in \Sigma_n(M)$.
Then $\sigma^\circ$ is an open $M$-TF equivalence class 
by Theorem \ref{Sigma(M)=Sigma(N(M)) intro 1}(b).
Setting $\rmt_\sigma M:=\rmt_\theta M$ and 
$\rmf_\sigma M:=\rmf_\theta M$ 
by taking any $\theta \in \sigma^\circ$, we get
\begin{align*}
\sigma^\circ&=\{\theta \in K_0(\AA)_\R^* \mid
\rmt_\sigma M \in \TT_\theta, \ \rmf_\sigma M \in \FF_\theta\},\\
\partial \sigma&=
\{\theta \in \sigma \mid
\text{$\rmt_\sigma M \notin \TT_\theta$ or 
$\rmf_\sigma M \notin \FF_\theta$}\}.
\end{align*}
Thus for the purpose above, 
we define two subsets $\partial^+ \sigma$
and $\partial^- \sigma$ of the boundary $\partial \sigma$ by
\begin{align*}
\partial^+ \sigma
=\{\theta \in \sigma \mid \rmt_\sigma M \notin \TT_\theta\}
\quad \text{and} \quad
\partial^- \sigma
=\{\theta \in \sigma \mid \rmf_\sigma M \notin \FF_\theta\},
\end{align*} 
Then we have $\partial \sigma=
\partial^+ \sigma \cup \partial^- \sigma$,
and $\partial^\pm \sigma$ are
unions of faces of $\sigma$;
see Proposition \ref{union of faces}.
Thus we define 
\begin{align*}
\Facet^\pm \sigma:=\{\tau \in \Facet \sigma
\mid \tau \subset \partial^\pm \sigma\}.
\end{align*}
The following is our result 
on facets of $\sigma \in \Sigma_n(M)$.
In particular, $\partial^\pm \sigma$ have purity by (b).

\begin{theorem}[Theorem \ref{facet +-}]\label{facet +- intro}
For $M \in \AA$, let $\sigma \in \Sigma_n(M)$ and $\{v\}\in\Face_0 \rmN(M)$ the corresponding vertex.
\begin{enumerate}[\rm (a)]
\item 
$\Facet \sigma=\Facet^+ \sigma \sqcup \Facet^- \sigma$.
\item
We have $\partial^+\sigma=\bigcup_{\tau\in\Facet^+\sigma}\tau$ and $\partial^-\sigma=\bigcup_{\tau\in\Facet^-\sigma}\tau$.
\item Let $\sigma'\in\Sigma_n(M)$ such that $\tau:=\sigma\cap\sigma'\in\Facet\sigma$, and $\{v'\}\in\Face_0 \rmN(M)$ the corresponding vertex to $\sigma'$.
Then precisely one of the following statements holds.
\begin{enumerate}[\rm(i)]
\item $v>v'$ and $\tau\in\Facet^+\sigma$.
\item $v<v'$ and $\tau\in\Facet^-\sigma$.
\end{enumerate}
\end{enumerate}
\end{theorem}

The organization of this paper is as follows.
We recall the definitions and basic properties of 
polytopes, cones and fans in $\R^n$ in Subsection \ref{cones and fans},
and stability conditions, the wall-chamber structure, 
semistable torsion pairs and the TF equivalence 
in Subsection \ref{semistable pre}.
The definition and our main results on $M$-TF equivalences are stated
in Subsection \ref{Subsec def M-TF}.
Some examples are given in Subsection \ref{Subsec example}.
The proofs of our main results are given in Subsection \ref{Subsec proof}.
Finally, we study maximal cones in $\Sigma(M)$ in Section \ref{Sec max}.

\subsection*{Acknowledgment}
S.A. was supported by JSPS KAKENHI Grant Numbers JP23K12957. 
O.I. was supported by JSPS KAKENHI Grant Numbers JP23K22384.

\section{Preliminary}\label{Section Pre}

\subsection{Preliminaries on polytopes, cones and fans}
\label{cones and fans}
In this subscection,
we recall some fundamental materials on fans and polytopes. 
We refer the reader to e.g.\ \cite{Fu,BR,BP} for these materials. 

For any subset $X \subset \R^n$, 
the symbol $V_X$ denotes the $\R$-vector subspace spanned 
by $\{w-v \mid v,w \in X\}$,
and we set $A_X:=v+V_X$ taking $v \in X$. 
The set $A_X$ does not depend on the choice of $v$,
and is the smallest affine subspace containing $X$.
We write $X^\circ$ for its \emph{relative interior},
that is, the interior of $X$ as a subset of $A_X$.
For any finite set $X \subset \R^n$,
we write $\conv X$ for the convex hull of $X$.
In particular, $\conv \emptyset=\{0\}$ holds.

We here prepare some symbols and terminology on polytopes in $\R^n$.

\begin{definition}\label{polytope def}
We define the following notions.
\begin{enumerate}[\rm (a)]
\item 
A subset $P \subset \R^n$ is called a \emph{polytope} 
(resp.~a \emph{rational polytope}) in $\R^n$
if there exists a finite subset $S \subset \R^n$ 
(resp.~$S \subset \Z^n$) such that $P=\conv S$.
If $P$ is a polytope in $\R^n$, its dimension is defined by
$\dim_\R P:=\dim_\R A_P=\dim_\R V_P$.
\item
Let $P$ be a polytope in $\R^n$.
Then a subset $F \subset P$ is called a \emph{face} of $P$
if the following hold:
\begin{enumerate}[\rm (i)]
\item
$F \ne \emptyset$, and $F$ is convex.
\item
If $v,w \in P$ satisfy $[v,w] \cap F \ne \emptyset$, then $v,w \in F$.
\end{enumerate}
In particular, $P$ itself is a face of $P$, and each face is a polytope again.
We set $\Face P$ as the set of all faces of the polytope $P$,
and $\Face_i P$ as the set of all faces of dimension $i$.
We call each element of $\Face_0 P$ a \emph{vertex} of $P$,
and each element of $\Face_1 P$ an \emph{edge} of $P$.
\end{enumerate}
\end{definition}

We next define polyhedral cones and fans in $\R^n$.

\begin{definition}
We define the following notions.
\begin{enumerate}[\rm (a)]
\item 
A subset $C \subset \R^n$ is said to be 
a \emph{polyhedral cone} (resp.~a \emph{rational polyhedral cone}) if
there exist finitely many elements $v_1,\ldots,v_m \in \R^n$ 
(resp.~$v_1,\ldots,v_m \in \Z^n$)
such that $C=\sum_{i=1}^m \R_{\ge 0}v_i$.
If $C$ is a polyhedral cone in $\R^n$, its dimension is defined by
$\dim_\R P:=\dim_\R A_P=\dim_\R V_P$.
\item 
If $C$ is a polyhedral cone in $\R^n$, 
a subset $F \subset C$ is called a \emph{face} of $C$
if the following conditions are satisfied:
\begin{enumerate}[\rm (i)]
\item
$F \ne \emptyset$, $\R_{\ge 0}F \subset F$.
\item
If $v,w \in C$ satisfy $v+w \in F$, then $v,w \in F$.
\end{enumerate}
In this case, $C$ itself is a face of $C$, 
and each face $F$ is a polyhedral cone again.
We denote by $\Face C$ the set of all faces of the cone $C$, 
and by $\Face_iC$ the set of all faces of dimension $i$.
\end{enumerate}
\end{definition}

In particular, $\{0\}$ is a rational polyhedral cone in $\R^n$ as $m=0$.
Moreover, the relative interior $C^\circ$ of a cone $C$ is well-defined
as the interior of $C$ in $V_C$.

The following notion is crucial in our study.

\begin{definition}\label{define fan}
We define the following notions.
\begin{enumerate}[\rm(a)]
\item
\cite[Definition 6.2.2]{CLS} 
A \emph{generalized fan} $\Sigma$ in $\R^n$ means 
a set of polyhedral cones in $\R^n$ satisfying 
\begin{enumerate}[\rm (i)]
\item 
$\Face \sigma \subset \Sigma$ for each $\sigma \in \Sigma$, and
\item 
$\sigma \cap \tau \in \Face \sigma \cap \Face \tau$ 
for any $\sigma,\tau \in \Sigma$.
\end{enumerate}
We denote by $\Sigma_i$ 
the set of all elements of dimension $i$ in $\Sigma$.
\item 
A generalized fan $\Sigma$ in $\R^n$ is said to be 
\emph{rational} if each $\sigma \in \Sigma$ 
is a rational polyhedral cone,
\emph{finite} if $\Sigma$ is a finite set,
and \emph{complete} 
if the union $\bigcup_{\sigma \in \Sigma} \sigma$ is $\R^n$.
\end{enumerate}
\end{definition}

\begin{caution}\label{caution fan}
Notice that we do \emph{not} assume in Definition \ref{define fan} that the cones are strongly convex. 
In other words, a \emph{fan} in the usual sense is nothing but 
a generalized fan whose cones are strongly convex. 
\end{caution}

The normal generalized fan of a polytope is defined as follows, 
see \cite{BP,CLS,St}.

\begin{definition-proposition}\label{Sigma(P)}
Let $P$ be a polytope in $V:=\R^n$, and $V^*:=\Hom_\R(V,\R)$.
\begin{enumerate}[\rm (a)]
\item
For each $\theta \in V^*$, 
we define $P_\theta \in \Face P$ by 
\begin{align*}
P_\theta&:=\{ v \in P \mid \theta(v)=\max \theta(P)\}.
\end{align*}
Then 
we have
\begin{align*}
\Face P=\{ P_\theta \mid \theta \in V^*\}.
\end{align*}
\item
For any face $F \in \Face P$, we set
\begin{align*}
\sigma_F&:=\{ \theta \in V^* \mid F \subset
P_\theta\}.
\end{align*}
The \emph{normal generalized fan} of $P$ is a finite, complete, 
generalized fan in $V^*$ defined by
\begin{align*}
\Sigma(P)&:=\{ \sigma_F \mid F \in \Face P\}
=\{ \sigma_{P_\theta} \mid \theta \in V^* \}.
\end{align*}
Let $\Sigma_i(P):=\Sigma(P)_i$ for each $i \in \{0,\ldots,n\}$.
\item
We have order-reversing bijections
\begin{align*}
\Face P \simeq\Sigma(P)
\end{align*}
given by $F \mapsto \sigma_F$ for each $F \in \Face P$ and
$\sigma \mapsto P_\theta$ for each $\sigma \in \Sigma(P)$
taking $\theta \in \sigma^\circ$.
For each $i\in\{0,\ldots,n\}$, the bijections are restricted to bijections
\begin{align*}
\Face_i P \simeq\Sigma_{n-i}(P).
\end{align*}
\item
Let $\theta \in V^*$.
Then $\sigma_{P_\theta}$ is the smallest element 
of the normal generalized fan $\Sigma(P)$ which contains $\theta$.
In particular, $\sigma_{P_\theta}$ is the unique element
$\sigma \in \Sigma(P)$ such that $\theta \in \sigma^\circ$.
\end{enumerate}
\end{definition-proposition}

Clearly, $P$ has the full dimension $n$ 
if and only if $\Sigma(P)$ is a fan.
Moreover if $P$ is a lattice polytope, then
$\Sigma(P)$ is a rational generalized fan. 

Since $\Sigma(P)$ is finite and complete,
a cone $\sigma \in \Sigma(P)$ is maximal in $\Sigma(P)$
if and only if $\sigma \in \Sigma_n(M)$.
The bijection $\Face P \to \Sigma(P)$ 
in Definition-Proposition \ref{Sigma(P)}(c),
satisfies the following property.

\begin{proposition}\label{Face_d N(M) bij}
Let $P$ be a polytope in $V:=\R^n$.
Let $\sigma=\bigcap_{i=1}^m \sigma_i \in \Sigma$
with each $\sigma_i \in \Sigma_n(P)$.
For each $i \in \{1,\ldots,m\}$, define $v_i$ 
as the vertex of $P$ corresponding to $\sigma_i$. 
Then the bijection $\Sigma(P) \to \Face P$ sends $\sigma$ 
to the smallest face of $P$ containing 
$v_1,\ldots,v_m$.
\end{proposition}

\subsection{Preliminaries on stability conditions 
and semistable torsion pairs}
\label{semistable pre}
Next we prepare some notions in the representation theory.
Throughout this paper, 
any subcategory is assumed to be a full subcategory.

We first recall some general properties 
of abelian categories of finite length.
Let $\CC$ be an abelian category of finite length.
Then $\CC$ has the Jordan-H\"older property.
Thus each $M \in \CC$ has a \emph{composition series}
$0=M_0 \subsetneq M_1 \subsetneq \cdots \subsetneq M_\ell=M$,
that is, a sequence where each subfactor 
$M_i/M_{i-1}$ is a simple object of $\CC$.
The subfactors $M_i/M_{i-1}$ are called 
the \emph{composition factors} of $M$ in $\CC$.
Up to reordering and isomorphisms,
the composition factors of $M$ in $\CC$ and their multiplicities 
do not depend on the choice of a composition series of $M$ in $\CC$.
Thus the \emph{length} $\ell(M)$ of $M$ is well-defined.
Composition factors play a very important role in this paper.

In this paper, we let $\AA$ be a length abelian category
with $S(1),\ldots,S(n)$ all the distinct isoclasses of simple objects.
Then the Grothendieck group $K_0(\AA)$ is
the free abelian group $\bigoplus_{i=1}^n \Z[S(i)] \simeq \Z^n$,
so the real Grothendieck group $K_0(\AA)_\R:=K_0(\AA) \otimes_\Z \R$
is the $\R$-vector space $\bigoplus_{i=1}^n \R[S(i)] \simeq \R^n$.

The main target of this paper is the dual real Grothendieck group
$K_0(\AA)_\R^*=\Hom_\R(K_0(\AA),\R)$.
We write $[S(1)]^*,\ldots,[S(n)]^* \in K_0(\AA)_\R^*$ 
for the dual basis of $[S(1)],\ldots,[S(n)] \in K_0(\AA)_\R$,
and then we have
$K_0(\AA)_\R^*=\bigoplus_{i=1}^n \R[S(i)]^* \simeq \R^n$.

By definition, each element $\theta \in K_0(\AA)_\R^*$ is 
an $\R$-linear form $\theta \colon K_0(\AA)_\R \to \R$.
In particular, for any $M \in \AA$,
we have a real number $\theta(M) \in \R$,
and if $\theta=[S(i)]^*$, then $\theta(M)$ is the multiplicity of $S(i)$
in the composition factors of $M$.
By using this, stability conditions are defined as follows.

\begin{definition}\label{stab}\cite[Definition 1.1]{K}
Let $\theta \in K_0(\AA)_\R^*$.
\begin{enumerate}[\rm (a)]
\item
Let $M \in \AA$.
Then $M$ is said to be \emph{$\theta$-semistable} if 
$\theta(M)=0$ and $\theta(N) \ge 0$ 
for any factor object $N$ of $M$.
A $\theta$-semistable object $M$ is said to be \emph{$\theta$-stable} if
moreover $M \ne 0$ and $\theta(N)>0$ 
for any nonzero factor object $N \ne 0$ of $M$.
\item
We define the \emph{$\theta$-semistable subcategory} 
$\WW_\theta \subset \AA$ as the full subcategory consisting of
all $\theta$-semistable objects.
\end{enumerate}
\end{definition}

Then we have the following basic properties;
see \cite{Rudakov,HR}.

\begin{proposition}\label{simple stable}
Let $\theta \in K_0(\AA)_\R^*$.
\begin{enumerate}[\rm (a)]
\item
$\WW_\theta$ is a wide subcategory of $\AA$, that is,
$\WW_\theta$ is closed under taking kernels, cokernels 
and extensions in $\AA$.
\item
$\WW_\theta$ is an abelian category of finite length;
hence $\WW_\theta$ satisfies the Jordan-H\"older property.
\item
For any $M \in \WW_\theta$, 
$M$ is a simple object in $\WW_\theta$ if and only if 
$M$ is $\theta$-stable.
\end{enumerate}
\end{proposition}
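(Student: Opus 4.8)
The whole statement should follow from one elementary bookkeeping fact. Since $\theta$ is additive on short exact sequences, the condition $M \in \WW_\theta$ — that is, $\theta(M)=0$ and $\theta(N) \ge 0$ for every quotient $N$ of $M$ — is equivalent to $\theta(M)=0$ together with $\theta(L) \le 0$ for every subobject $L \subseteq M$. I would record this reformulation first; in particular it makes clear that $0 \in \WW_\theta$.

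For (a), let $f \colon M \to N$ be a morphism in $\WW_\theta$ with image $I$. As a quotient of $M$ we get $\theta(I) \ge 0$, and as a subobject of $N$ we get $\theta(I) \le 0$, so $\theta(I)=0$; additivity then gives $\theta(\Kernel f)=\theta(\Cokernel f)=0$. Every quotient of $I$ is a quotient of $M$, and every quotient of $N/I=\Cokernel f$ is a quotient of $N$, hence all have nonnegative value under $\theta$; thus $I, \Cokernel f \in \WW_\theta$. For $K := \Kernel f$ and a quotient $K' = K/K''$, the isomorphism theorems give an exact sequence $0 \to K' \to M/K'' \to I \to 0$, and since $M/K''$ is a quotient of $M$ we obtain $\theta(K') = \theta(M/K'') \ge 0$; thus $K \in \WW_\theta$. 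Closure under extensions is similar: for $0 \to L \to M \to N \to 0$ with $L, N \in \WW_\theta$ and a quotient $M \twoheadrightarrow M'$ with image $L'$ of $L$, write $\theta(M') = \theta(L') + \theta(M'/L')$ with $L'$ a quotient of $L$ and $M'/L'$ a quotient of $N$, so both terms are $\ge 0$, while $\theta(M)=0$ by additivity. In particular $\WW_\theta$ is closed under finite direct sums.

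For (b), by (a) the subcategory $\WW_\theta$ is wide in the abelian category $\AA$, hence it is itself abelian, with kernels, cokernels and biproducts computed in $\AA$ and with exact inclusion into $\AA$. A subobject of $M \in \WW_\theta$ formed inside $\WW_\theta$ is in particular a subobject of $M$ in $\AA$, so any chain of such subobjects has length bounded by the (finite) length of $M$ in $\AA$; therefore $\WW_\theta$ satisfies both chain conditions on subobjects, every object admits a finite composition series, and Jordan--H\"older follows as for any abelian length category. For (c): if $M \in \WW_\theta$ is $\theta$-stable and $L \subseteq M$ lies in $\WW_\theta$, then $\theta(L)=0$ forces $L=0$ or $L=M$, so $M$ is simple in $\WW_\theta$; conversely, if $M$ is simple in $\WW_\theta$ but not $\theta$-stable, then, being nonzero and $\theta$-semistable, it admits a noninvertible epimorphism $M \twoheadrightarrow N$ with $N \ne 0$ and $\theta(N)=0$ (semistability already gives $\theta(N) \ge 0$). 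But then $N \in \WW_\theta$, since its quotients are quotients of $M$, so $\Kernel(M \to N) \in \WW_\theta$ by (a), yielding a subobject of $M$ in $\WW_\theta$ that is neither $0$ nor $M$ — a contradiction.

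The one point requiring genuine care is the kernel-closure step in (a): one must argue through the three-term sequence relating $K/K''$, $M/K''$ and $I$, since a subobject of a semistable object need not itself be semistable. Granting that, (b) is formal from the general theory of wide subcategories of length categories, and (c) reduces, via the remark that a proper quotient with $\theta=0$ is automatically semistable, to an application of (a).
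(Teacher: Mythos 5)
Your proposal is correct. The paper does not prove Proposition~\ref{simple stable} itself (it defers to \cite{Rudakov,HR}), so there is no in-paper argument to compare against; your proof is the standard direct one.

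You correctly isolate the only non-routine step: closure under kernels. A subobject of a $\theta$-semistable object need not be semistable, so one cannot test $K=\Kernel f$ directly; the passage through $0\to K/K''\to M/K''\to I\to 0$, using $\theta(I)=0$ together with $\theta(M/K'')\ge 0$, is exactly right. The cokernel, extension, (b) and (c) steps are then routine given (a), and your reduction in (c) --- a quotient $N$ of a semistable $M$ with $\theta(N)=0$ is itself in $\WW_\theta$, so the kernel of $M\twoheadrightarrow N$ lies in $\WW_\theta$ by (a) --- is the correct one. One small caveat worth flagging: Definition~\ref{stab}(a) reads ``$\theta(N)>0$ for any nonzero factor object $N\ne 0$ of $M$'', which taken literally would include $N\cong M$; the intended (and standard) meaning is ``nonzero \emph{proper} factor object'', and your proof of (c) implicitly and correctly uses this reading when you insist the epimorphism $M\twoheadrightarrow N$ be noninvertible.
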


We write $\simple \WW_\theta$ for 
the set of all isoclasses of simple objects of $\WW_\theta$.
By (b), for each $M \in \WW_\theta$, 
the composition factors of $M$ in $\WW_\theta$ is well-defined.
Thus we define the following symbol.

\begin{definition}\label{def supp}
For any $\theta \in K_0(\AA)_\R^*$ and $M \in \WW_\theta$, we set
\begin{align*}
\supp_\theta M:=\{ \text{isoclasses of 
composition factors of $M$ in $\WW_\theta$} \} \subset \simple \WW_\theta.
\end{align*}
\end{definition}

We remark that $\supp_\theta M$ is always a finite set,
but $\simple \WW_\theta$ can be an infinite set.
We do not consider the Grothendieck group of $\WW_\theta$ in this paper.

The category $\Filt \supp_\theta M$ is 
the smallest Serre subcategory of $\WW_\theta$
containing $M$,
where $\Filt \CC \subset \AA$ is
the full subcategory of all $M \in \AA$ admitting a sequence
$0=M_0 \subset M_1 \subset \cdots \subset M_\ell=M$
with $M_i/M_{i-1} \in \CC$.

Next we recall the wall-chamber structure in $K_0(\AA)_\R^*$
given by stability conditions.

\begin{definition}\label{Theta_M}
\cite[Definition 6.1]{Bridgeland}\cite[Definition 3.2]{BST}
We set the following notions.
\begin{enumerate}[\rm (a)]
\item
Let $M \in \AA \setminus \{0\}$.
Then we define the \emph{wall} $\Theta_M \subset K_0(\AA)_\R^*$ 
associated to $M$ by 
\begin{align*}
\Theta_M:=\{ \theta \in K_0(\AA)_\R^* \mid M \in \WW_\theta \}.
\end{align*}
\item
The \emph{wall-chamber structure} on $K_0(\AA)_\R^*$ is defined to be
the structure given by all walls $\Theta_M$ 
for $M \in \AA \setminus \{0\}$.
Thus a \emph{chamber} means each connect component of 
\begin{align*}
K_0(\AA)_\R^* \setminus 
\overline{\bigcup_{M \in \AA \setminus \{0\}}\Theta_M}.
\end{align*}
\end{enumerate}
\end{definition}

For a polyhedral cone $\sigma$ in $K_0(\AA)_\R^*$,
$\R \sigma$ denotes the vector subspace spanned 
by $\sigma$ in $K_0(\AA)_\R^*$.
For any subset $\XX \subset \AA$, 
we write $\R \XX \subset K_0(\AA)_\R$
for the subspace $\R \{[X] \mid X \in \XX\}$,
and we define a subspace $H_\XX \subset K_0(\AA)_\R^*$ by
\begin{align*}
H_\XX:=\{ \theta \in K_0(\AA)_\R^* \mid 
\text{for any $X \in \XX$, $\theta(X)=0$} \}
=\{ \theta \in K_0(\AA)_\R^* \mid \theta(\R \XX)=0 \}.
\end{align*}
Under this terminology, the face structure of $\Theta_M$ is 
related to $\supp_\theta M$ as follows.

\begin{lemma}\label{Theta_M face}\cite[Lemma 2.7]{A}
Let $M \in \AA \setminus \{0\}$ and $\sigma,\tau \in \Face \Theta_M$.
\begin{enumerate}[\rm (a)]
\item
For any $\theta \in \sigma^\circ$, the set $\supp_\theta M$ is constant,
and we have 
\begin{align*}
\sigma=\Theta_M \cap H_{\supp_\theta M}, \quad
\R\sigma=H_{\supp_\theta M}, \quad
\dim_\R \sigma=n-\dim_\R (\R \supp_\theta M).
\end{align*}
\item
Let $\theta \in \sigma^\circ$ and $\eta \in \tau^\circ$.
Then $\sigma \subset \tau$ holds if and only if
$\supp_\eta M \subset \WW_\theta$,
and $\sigma=\tau$ holds if and only if
$\supp_\eta M=\supp_\theta M$.
\end{enumerate}
\end{lemma}

We give an easy example of the wall-chamber structure.
In examples of this paper,
we often let $k$ be a field, $A$ be a finite-dimensional $k$-algebra, 
and the abelian category $\AA$ be the category $\mod A$ 
of finitely generated right $A$-modules.
In this setting, $S(1),\ldots,S(n)$ are the isoclasses of simple $A$-modules.

\begin{example}\label{A2 wall}
Let $A$ be the path algebra $k(1 \to 2)$.
There are exactly three isoclasses of indecomposable $A$-modules
$S(1),S(2)$ and $M=\begin{smallmatrix}1\\2\end{smallmatrix}$. 
Then the wall-chamber structure of $K_0(\mod A)_\R^*$ is depicted as follows:
\begin{align*}
\begin{tikzpicture}[baseline=0pt,scale=0.8]
\node (00)[coordinate] at ( 0, 0) {};
\node (+0)[coordinate,label=  0:{$\scriptstyle{ [S(1)]^*}$}] at ( 2, 0) {};
\node (++)[coordinate]                                       at ( 2, 2) {};
\node (0+)[coordinate,label= 90:{$\scriptstyle{ [S(2)]^*}$}] at ( 0, 2) {};
\node (-+)[coordinate]                                       at (-2, 2) {};
\node (-0)[coordinate,label=180:{$\scriptstyle{-[S(1)]^*}$}] at (-2, 0) {};
\node (--)[coordinate]                                       at (-2,-2) {};
\node (0-)[coordinate,label=270:{$\scriptstyle{-[S(2)]^*}$}] at ( 0,-2) {};
\node (+-)[coordinate]                                       at ( 2,-2) {};
\draw[very thick] 
(0+) to[edge label={$\scriptstyle \Theta_{S(1)}$},pos=0.2] (0-);
\draw[very thick] 
(-0) to[edge label={$\scriptstyle \Theta_{S(2)}$},pos=0.8] (+0);
\draw[very thick]
(00) to[edge label={$\scriptstyle \Theta_M$},pos=0.8] (+-);
\end{tikzpicture}.
\end{align*}
The wall $\Theta_M$ is a half-line from the origin $0$,
so $\Face \Theta_M=\{\{0\},\Theta_M\}$.
For each $\theta \in \Theta_M$,
$\supp_\theta M$ is $\{S(2),S(1)\}$ if $\theta=0$,
and is $\{M\}$ otherwise.
Thus we can see that Lemma \ref{Theta_M face} holds.
\end{example}

We remark that $\Theta_M$ is not necessarily a simplicial cone
by the following example.

\begin{example}\label{not simplicial}
Define an algebra $A$, and a module $M \in \mod A$ by
\begin{align*}
A:=k \left( \begin{tikzpicture}[baseline=(0.base),scale=0.8,->]
\node (0) at ( 0, 0)   {$\vphantom{0}$};
\node (1) at ( 0, 0.5) {$1$};
\node (2) at ( 0,-0.5) {$2$};
\node (3) at ( 1, 0.5) {$3$};
\node (4) at ( 1,-0.5) {$4$};
\draw [thick] (1) to (2);
\draw [thick] (2) to (4);
\draw [thick] (1) to (3);
\draw [thick] (3) to (4);
\end{tikzpicture} \right), \quad
M:= \left( \begin{tikzpicture}[baseline=(0.base),scale=0.8,->]
\node (0) at ( 0, 0)   {$\vphantom{0}$};
\node (1) at ( 0, 0.5) {$k$};
\node (2) at ( 0,-0.5) {$k$};
\node (3) at ( 1, 0.5) {$k$};
\node (4) at ( 1,-0.5) {$k$};
\draw [thick] (1) to (2);
\draw [thick] (2) to (4);
\draw [thick] (1) to (3);
\draw [thick] (3) to[edge label={\scriptsize $\lambda$}] (4);
\end{tikzpicture} \right) \quad (\lambda \in k^\times).
\end{align*}
Then $A$ is a tame hereditary algebra,
and $M$ is a module in the mouth of some homogeneous tube 
in the Auslander-Reiten quiver.
The wall $\Theta_M$ is not simplicial, because direct calculation gives that
$\Theta_M$ has four one-dimensional faces
\begin{align*}
\R_{\ge 0}([S(1)]^*-[S(2)]^*),\R_{\ge 0}([S(1)]^*-[S(3)]^*),
\R_{\ge 0}([S(2)]^*-[S(4)]^*),\R_{\ge 0}([S(3)]^*-[S(4)]^*).
\end{align*}
\end{example}

We next recall torsion pairs.
For a class $\SS$ of objects in $\AA$, let
\begin{align*}
\SS^\perp:=\{M \in \AA \mid \Hom_\AA(\SS,M)=0\}\ \text{ and }\ 
{^\perp \SS}:=\{M \in \AA \mid \Hom_\AA(M,\SS)=0\}.
\end{align*}

\begin{definition}
Let $\TT,\FF$ be full subcategories of $\AA$.
\begin{enumerate}[\rm (a)]
\item
The subcategory $\TT$ (resp.~$\FF$) is called 
a \emph{torsion class} (resp.~a \emph{torsion-free class}) in $\AA$ 
if it is closed under factor objects (resp.~subobjects) and extensions.
\item
We say that a pair $(\TT,\FF)$ of 
a torsion class $\TT$ and a torsion-free class $\FF$ 
is a \emph{torsion pair} in $\AA$ 
if $\FF=\TT^\perp$ and $\TT={^\perp \FF}$.
\end{enumerate}
\end{definition}

Since $\AA$ is of finite length,
we can use the following well-known characterizations.

\begin{lemma}\label{torsion chara}
Let $\TT,\FF$ be full subcategories of $\AA$.
\begin{enumerate}[\rm (a)]
\item
The pair $(\TT,\FF)$ is a torsion pair in $\AA$ if and only if $\TT$ is a torsion class and $\FF=\TT^\perp$ if and only if $\FF$ is a torsionfree class and $\TT={^\perp \FF}$.
\item
Let $(\TT,\FF)$ be a torsion pair of $M \in \AA$.
Set $\rmt M$ as the largest subobject $L \subset M$ 
such that $L \in \TT$, and $\rmf M:=M/\rmt M$.
Then $\rmf M \in \FF$, so we have an exact sequence 
(called the \emph{canonical sequence})
\begin{align*}
0 \to \rmt M \to M \to \rmf M \to 0 \quad (\rmt M \in \TT, \ \rmf M \in \FF).
\end{align*}
\end{enumerate}
\end{lemma}

In this paper, we are interested only in the torsion pairs
given by the following subcategories.

\begin{definition}\label{num tors}\cite[Subsection 3.1]{BKT}
Let $\theta \in K_0(\AA)_\R^*$.
We define the following subcategories of $\AA$:
\begin{align*}
\overline{\TT}_\theta&:=\{ M \in \AA \mid 
\text{$\theta(N) \ge 0$ for any factor object $N$ of $M$} \},\\
\FF_\theta&:=\{ M \in \AA \mid 
\text{$\theta(L) < 0$ for any nonzero subobject $L \ne 0$ of $M$} \},\\
\TT_\theta&:=\{ M \in \AA \mid 
\text{$\theta(N) > 0$ for any nonzero factor object $N \ne 0$ of $M$} \},\\
\overline{\FF}_\theta&:=\{ M \in \AA \mid 
\text{$\theta(L) \le 0$ for any subobject $L$ of $M$} \}.
\end{align*}
\end{definition}

By definition, 
we get $\WW_\theta=\overline{\TT}_\theta \cap \overline{\FF}_\theta$.
It is easy to see that 
$\TT_\theta \subset \overline{\TT}_\theta$ 
are torsion classes in $\AA$, and
$\FF_\theta \subset \overline{\FF}_\theta$ 
are torsion-free classes in $\AA$.
They form torsion pairs as follows.

\begin{proposition}\label{numerical torsion pair}\cite[Proposition 3.1]{BKT}
Let $\theta \in K_0(\AA)_\R^*$.
Then $(\overline{\TT}_\theta,\FF_\theta)$ and 
$(\TT_\theta,\overline{\FF}_\theta)$ are torsion pairs in $\AA$.
\end{proposition}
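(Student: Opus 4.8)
The statement to be proved is that $(\overline{\TT}_\theta,\FF_\theta)$ and $(\TT_\theta,\overline{\FF}_\theta)$ are torsion pairs in $\AA$. The plan is to use the characterization in Lemma \ref{torsion chara}(a): for a pair $(\TT,\FF)$ of full subcategories, it suffices to check that $\FF=\TT^\perp$ and $\TT={}^\perp\FF$. Since $\AA$ has finite length, every object has a finite filtration by simples, so the Hom-vanishing conditions defining the four subcategories are closed under the relevant operations; in particular, by Lemma \ref{torsion chara}(b), $\overline{\TT}_\theta$ and $\TT_\theta$ are already known to be torsion classes and $\FF_\theta$ and $\overline{\FF}_\theta$ are torsion-free classes. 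The work is therefore to match up the two specified classes in each pair.

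First I would treat $(\overline{\TT}_\theta,\FF_\theta)$. One inclusion, $\Hom_A(\overline{\TT}_\theta,\FF_\theta)=0$, is immediate: if $M\in\overline{\TT}_\theta$, $N\in\FF_\theta$, and $f\colon M\to N$ is nonzero, then $\Image f$ is a nonzero factor object of $M$, hence $\theta(\Image f)\ge 0$, while $\Image f$ is a nonzero subobject of $N$, hence $\theta(\Image f)<0$, a contradiction. So $\FF_\theta\subset\overline{\TT}_\theta^\perp$ and $\overline{\TT}_\theta\subset{}^\perp\FF_\theta$. For the reverse inclusions I would argue directly: given $M\in\AA$, construct its canonical sequence by taking $\overline{\rmt}_\theta M$ to be the largest subobject lying in $\overline{\TT}_\theta$ (this exists since $\overline{\TT}_\theta$ is a torsion class), and then show $M/\overline{\rmt}_\theta M\in\FF_\theta$ — equivalently, that any nonzero subobject $L/\overline{\rmt}_\theta M$ of the quotient has $\theta(L/\overline{\rmt}_\theta M)<0$. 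If instead $\theta(L/\overline{\rmt}_\theta M)\ge 0$, then one checks using finite length and the definition of $\overline{\TT}_\theta$ that $L$ itself lies in $\overline{\TT}_\theta$ (every factor object of $L$ is an extension of a factor object of $L/\overline{\rmt}_\theta M$ by a factor object of $\overline{\rmt}_\theta M$, and $\theta$ is additive), contradicting maximality of $\overline{\rmt}_\theta M$. This gives the canonical sequence, and combined with the Hom-vanishing, Lemma \ref{torsion chara}(a) yields that $(\overline{\TT}_\theta,\FF_\theta)$ is a torsion pair. The pair $(\TT_\theta,\overline{\FF}_\theta)$ is handled by the dual (or rather parallel) argument, replacing the weak inequalities by strict ones and vice versa — or, more economically, by applying the already-proven case to the linear form $-\theta$ on the opposite category $\AA^{\op}$, noting that $\overline{\TT}_{-\theta}$ computed in $\AA^{\op}$ is $\overline{\FF}_\theta$ and $\FF_{-\theta}$ in $\AA^{\op}$ is $\TT_\theta$.

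The main obstacle is the step verifying that the complement of $\overline{\rmt}_\theta M$ lies in $\FF_\theta$: one must carefully manipulate the subobject/factor-object bookkeeping to show that a would-be violation of the $\FF_\theta$-condition propagates back to enlarge the torsion subobject, and here the additivity of $\theta$ on short exact sequences together with the Jordan–Hölder property of $\AA$ does the essential work. Everything else is formal, and the passage to $\AA^{\op}$ keeps the write-up short by avoiding a duplicated argument.
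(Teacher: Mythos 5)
The paper does not give its own proof of this statement — it is cited verbatim from [BKT, Proposition 3.1] — so I can only assess your proposal on its own terms. Your overall plan is sound: the Hom-vanishing $\Hom_A(\overline{\TT}_\theta,\FF_\theta)=0$ via the image argument is correct, the appeal to Lemma \ref{torsion chara}(b) to see that $\overline{\TT}_\theta$ is closed under factors and extensions (hence a torsion class) is legitimate, and the passage to $\AA^{\op}$ with $-\theta$ to handle $(\TT_\theta,\overline{\FF}_\theta)$ is a clean way to avoid repeating the argument. However, the central step — showing $M/\overline{\rmt}_\theta M\in\FF_\theta$ — has a genuine gap. You assert that if $L\supsetneq\overline{\rmt}_\theta M$ and $\theta(L/\overline{\rmt}_\theta M)\ge 0$, then $L\in\overline{\TT}_\theta$, justified by the observation that every factor of $L$ is an extension of a factor of $L/\overline{\rmt}_\theta M$ by a factor of $\overline{\rmt}_\theta M$. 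Additivity of $\theta$ and $\overline{\rmt}_\theta M\in\overline{\TT}_\theta$ only give $\theta(\text{factor of }L)\ge\theta(\text{factor of }L/\overline{\rmt}_\theta M)$, and the right-hand side is controlled only when that factor is all of $L/\overline{\rmt}_\theta M$; proper factors of $L/\overline{\rmt}_\theta M$ may well have $\theta<0$. As an abstract implication your step reads ``$Y\in\overline{\TT}_\theta$, $0\to Y\to L\to X\to 0$ exact, $\theta(X)\ge 0$ $\Rightarrow$ $L\in\overline{\TT}_\theta$,'' which is false: take $Y=0$ and $L=X=\begin{smallmatrix}1\\2\end{smallmatrix}$ in $\mod k(1\to 2)$ with $\theta(S(1))=-1$, $\theta(S(2))=2$, so $\theta(L)=1\ge 0$ but $L\notin\overline{\TT}_\theta$ since $\theta(S(1))<0$.

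The repair is to choose $L$ more carefully rather than arguing with an arbitrary violator. Suppose $M/\overline{\rmt}_\theta M\notin\FF_\theta$. Among the nonzero subobjects $\bar L$ of $M/\overline{\rmt}_\theta M$ (finitely many possible values of $\theta$ on these, by finite length), pick one with $\theta(\bar L)$ maximal; by hypothesis $\theta(\bar L)\ge 0$. Then for every subobject $\bar L'\subsetneq\bar L$ one has $\theta(\bar L')\le\theta(\bar L)$ — by maximality if $\bar L'\ne 0$, and because $\theta(0)=0\le\theta(\bar L)$ otherwise — so $\theta(\bar L/\bar L')=\theta(\bar L)-\theta(\bar L')\ge 0$ for every factor of $\bar L$. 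Hence $\bar L\in\overline{\TT}_\theta$, and its preimage $L\subset M$ sits in $0\to\overline{\rmt}_\theta M\to L\to\bar L\to 0$ with both ends in $\overline{\TT}_\theta$, so $L\in\overline{\TT}_\theta$ by extension-closure, while $L\supsetneq\overline{\rmt}_\theta M$ — contradicting maximality of $\overline{\rmt}_\theta M$. With this change the rest of your proof goes through.
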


Thus we call 
$(\overline{\TT}_\theta,\FF_\theta)$ and 
$(\TT_\theta,\overline{\FF}_\theta)$ 
the \emph{semistable torsion pairs} associated to 
each $\theta \in K_0(\AA)_\R^*$.
The equality $\WW_\theta=\overline{\TT}_\theta \cap \overline{\FF}_\theta$
implies that the interval $[\TT_\theta,\overline{\TT}_\theta]$
in the lattice of torsion classes in $\AA$
is a wide interval in the sense of \cite{AP}.
By using semistable torsion pairs,
the following equivalence relation is defined.

\begin{definition}\cite[Definition 2.13]{A}
Let $\theta,\eta \in K_0(\AA)_\R^*$.
We say that $\theta$ and $\eta$ are \emph{TF equivalent}
if $(\overline{\TT}_\theta,\FF_\theta)=(\overline{\TT}_\eta,\FF_\eta)$ and 
$(\TT_\theta,\overline{\FF}_\theta)=(\TT_\eta,\overline{\FF}_\eta)$.
\end{definition}

The relationship with the wall-chamber structure is given as follows.
This is a slightly stronger version of \cite[Theorem 2.17]{A}.

Here, $[\theta,\eta]$ is the line segment in $\R^n$ connecting 
$\theta$ and $\eta$,
and a \emph{brick} in $\AA$ means a nonzero object $S \in \AA$
such that any nonzero endomorphism on $S$ is isomorphic.
In particular, if $S$ is $\theta$-stable 
for some $\theta \in K_0(\AA)_\R^*$, 
then $S$ is a simple object of $\WW_\theta$, so $S$ is a brick.
In a similar way to Lemma \ref{Theta_M face},
we set $H_X:=\{ \theta \in K_0(\AA)_\R^* \mid \theta(X)=0\}$
for each $X \in \AA$.

\begin{proposition}\label{not TF stable}
Let $\theta \ne \eta \in K_0(\AA)_\R^*$.
Then the following conditions are equivalent.
\begin{enumerate}[\rm (a)]
\item
The elements $\theta$ and $\eta$ are TF equivalent.
\item
Any $\zeta \in [\theta,\eta]$ is TF equivalent to $\theta$ and $\eta$.
\item
For any $\zeta \in [\theta,\eta]$,
the subcategory $\WW_\zeta \subset \AA$ is constant.
\item
For any nonzero object $M \in \AA \setminus \{0\}$,
the intersection $\Theta_M \cap [\theta,\eta]$ is 
$[\theta,\eta]$ or $\emptyset$.
\item
There exists no brick $S \in \AA$ such that
$H_S \cap [\theta,\eta]$ is one point.
\item
There exists no brick $S \in \AA$ such that
$H_S \cap [\theta,\eta]$
has a unique element $\zeta$ and that $S$ is $\zeta$-stable.
\end{enumerate}
\end{proposition}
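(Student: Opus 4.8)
The plan is to prove the cycle of implications (a) $\Rightarrow$ (b) $\Rightarrow$ (c) $\Rightarrow$ (d) $\Rightarrow$ (e) $\Rightarrow$ (f) $\Rightarrow$ (a), which is the standard way to handle an equivalence of many conditions. The geometric core is the observation that the relevant functions of $\theta$ — the values $\theta(M)$ for $M \in \AA$ — are linear, so on a line segment $[\theta,\eta]$ the set $\{\zeta : \theta(M) \ge 0\}$, etc., are subintervals, and that the subcategories $\overline{\TT}_\zeta, \FF_\zeta, \TT_\zeta, \overline{\FF}_\zeta, \WW_\zeta$ are determined by finitely many such inequalities once we restrict attention to bricks (since a torsion class is determined by the bricks it contains, and $\WW_\zeta$ by its simple objects, which are bricks by Proposition \ref{simple stable}).

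For (a) $\Rightarrow$ (b): parametrize $[\theta,\eta]$ as $\zeta_t = (1-t)\theta + t\eta$. For a brick $S$, the function $t \mapsto \zeta_t(S)$ is affine; if $\theta$ and $\eta$ are TF equivalent then $S \in \WW_\theta \Leftrightarrow S \in \WW_\eta$, and I would argue using the torsion-pair description that the sign behaviour of $\zeta_t$ on factor/sub objects forces $\WW_{\zeta_t}$, and hence all four torsion classes, to be constant; the key point is that an affine function which is $\ge 0$ (resp. $\le 0$, resp. $>0$) at both endpoints has the same sign property throughout, while a strict sign change is impossible because it would produce a brick witnessing non-equivalence. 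The implication (b) $\Rightarrow$ (c) is immediate since $\WW_\zeta = \overline{\TT}_\zeta \cap \overline{\FF}_\zeta$. For (c) $\Rightarrow$ (d): if $\WW_\zeta$ is constant on $[\theta,\eta]$ then for each $M$, either $M \in \WW_\zeta$ for all $\zeta$ in the segment (so $\Theta_M \supseteq [\theta,\eta]$) or for none (so $\Theta_M \cap [\theta,\eta] = \emptyset$). For (d) $\Rightarrow$ (e): a brick $S$ with $\Kernel\langle ?, S\rangle \cap [\theta,\eta]$ a single point $\zeta$ would, by semicontinuity/stability arguments as in \cite{A}, satisfy $S \in \WW_\zeta$ while $S \notin \WW_{\theta}, \WW_\eta$, contradicting (d) applied to $M = S$. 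The implication (e) $\Rightarrow$ (f) is trivial. The real content is (f) $\Rightarrow$ (a): assuming $\theta,\eta$ are \emph{not} TF equivalent, one of the four torsion classes differs, say (WLOG) there is a brick $S$ in one of $\overline{\TT}_\theta \setminus \overline{\TT}_\eta$ and symmetric cases; walking along $[\theta,\eta]$ and using that $\zeta \mapsto \zeta(S)$ is affine, one locates a first point $\zeta$ where the relevant inequality becomes an equality, and shows (using the wide/Jordan–Hölder structure of $\WW_\zeta$, Proposition \ref{simple stable}) that there is a $\zeta$-stable brick whose kernel hyperplane meets $[\theta,\eta]$ in exactly that point.

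I would organize the brick-existence arguments in (f) $\Rightarrow$ (a) around the minimal/maximal subobjects or factor objects realizing the defining inequalities of $\overline\TT_\theta,\FF_\theta,\TT_\theta,\overline\FF_\theta$: whenever two of these torsion classes disagree between $\theta$ and $\eta$, pick a witnessing object, pass to a composition factor in the appropriate semistable category $\WW_\zeta$ at a boundary point $\zeta \in [\theta,\eta]$, and verify that this composition factor is a $\zeta$-stable brick whose vanishing locus $\Kernel\langle ?, S\rangle$ cuts the segment in one point — using affineness to rule out the cases "empty" and "whole segment".

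The main obstacle I anticipate is precisely this last step: carefully producing the $\zeta$-stable brick and checking that $\Kernel\langle ?, S\rangle \cap [\theta,\eta]$ is a \emph{single} point rather than the whole segment. This requires knowing that at the transition point $\zeta$ the subcategory $\WW_\zeta$ genuinely acquires a new simple object (a $\zeta$-stable brick) not present on the nearby open part of the segment; establishing this cleanly will lean on Lemma \ref{Theta_M face}, on the finite-length and wide-subcategory properties in Proposition \ref{simple stable}, and on the fact that only finitely many bricks are relevant on a compact segment (so there are only finitely many "walls" $\Kernel\langle ?, S\rangle$ to worry about, and one can choose $\zeta$ to be an extreme such crossing). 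Given the earlier results quoted in the excerpt — in particular that this is stated as a "slightly stronger version of \cite[Theorem 2.17]{A}" — I expect the bulk of the machinery is already available, and the new work is just sharpening the endpoint analysis to pin down the "one point" condition.
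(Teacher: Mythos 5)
Your outline of the implication cycle (a)$\Rightarrow$(b)$\Rightarrow$(c)$\Rightarrow$(d)$\Rightarrow$(e)$\Rightarrow$(f)$\Rightarrow$(a) matches the paper's structure, and the paper likewise defers (a)$\Rightarrow\cdots\Rightarrow$(e) to the original proof of \cite[Theorem 2.17]{A}. However, for (f)$\Rightarrow$(a), which is the genuinely new content, what you have written is a plan rather than a proof, and you say so yourself (``the main obstacle I anticipate is precisely this last step''). That step is exactly where the paper does the actual work, and the paper's route is noticeably cleaner than the boundary-point/composition-factor walk you sketch. The paper cites \cite[Lemma 2.19]{A} to conclude that non-TF-equivalence of $\theta$ and $\eta$ forces $\overline{\TT}_\theta \cap \FF_\eta \ne \{0\}$ or $\overline{\FF}_\theta \cap \TT_\eta \ne \{0\}$; then it picks an object $S$ of \emph{minimal length} in (say) $\overline{\TT}_\theta \cap \FF_\eta$. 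The one-point condition drops out immediately: $S \in \overline{\TT}_\theta$ gives $\theta(S) \ge 0$, while $S \in \FF_\eta$ with $S \ne 0$ gives $\eta(S) < 0$, so $\Kernel\langle ?,S\rangle \cap [\theta,\eta]$ is a single point $\zeta$ with no case analysis needed. Then $\zeta$-stability of $S$ (hence $S$ a brick) follows from the minimality of length: for any nonzero proper subobject $X \subsetneq S$, the object $\overline{\rmt}_\theta X$ lies in $\overline{\TT}_\theta \cap \FF_\eta$ with strictly smaller length, so it is zero, forcing $X \in \FF_\theta$ and hence $\theta(X) < 0$; combined with $\eta(X) < 0$ one gets $\zeta(X) < 0$. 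Your proposed ``walk along the segment, find the first transition point, pass to a composition factor in $\WW_\zeta$'' strategy would need you to prove that a new simple object of $\WW_\zeta$ actually appears there and that its hyperplane meets the segment in exactly one point; none of that is carried out, and the minimal-length device avoids it entirely.

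Separately, your sketch of (d)$\Rightarrow$(e) is not sound as stated: from ``$S$ is a brick and $\Kernel\langle ?,S\rangle \cap [\theta,\eta]$ is one point $\zeta$'' you infer $S \in \WW_\zeta$, but $\zeta(S)=0$ is only a necessary condition for $\zeta$-semistability, not sufficient. Since the paper also just defers this chain to \cite{A} this is secondary, but if you were to fill in that step you would need a different argument than the one you indicate.
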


\begin{proof}
The conditions (a)--(e) are equivalent
by the original statement \cite[Theorem 2.17]{A}.
Moreover (e)$\Rightarrow$(f) is obvious.
It remains to show (f)$\Rightarrow$(a).

We assume that $\theta$ and $\eta$ are not TF equivalent,
and show the existence of a brick $S$ satisfying the condition in (f).

Since $\theta$ and $\eta$ are not TF equivalent,
$\overline{\TT}_\theta \cap \FF_\eta \ne \{0\}$ or 
$\overline{\FF}_\theta \cap \TT_\eta \ne \{0\}$ holds
by Lemma \cite[Lemma 2.19]{A}.
We may assume the former.
Then we can take nonzero 
$S \in \overline{\TT}_\theta \cap \FF_\eta$
such that the length $\ell(S)$ of $S$ in $\AA$ is minimum.
In this case, $\theta(S) \ge 0$ and $\eta(S)<0$ holds,
so $H_S \cap [\theta,\eta]$ is one point;
write $H_S \cap [\theta,\eta]=\{\zeta\}$.

It remains to show that $S$ is $\zeta$-stable.
Clearly $\zeta(S)=0$.
Let $0\neq X \subsetneq S$ be a proper subobject.
Then $\overline{\rmt}_\theta X \in \overline{\TT}_\theta$
is also in $\FF_\eta$, 
because $\overline{\rmt}_\theta X \subset X \subset S \in \FF_\eta$.
Thus we have 
$\overline{\rmt}_\theta X \in \overline{\TT}_\theta \cap \FF_\eta$.
By minimality of $\ell(S)$, we get $\overline{\rmt}_\theta X=0$, or equivalently,
$X \in \FF_\theta$.
Since $X \ne 0$, we have $\theta(X)<0$.
On the other hand, $X \subset S \in \FF_\eta$ imply $\eta(X)<0$.
Thus $\zeta(X)<0$ by $\zeta \in [\theta,\eta]$.
Therefore $S$ is $\zeta$-stable; hence $S$ is a brick.
\end{proof}

In the rest of this subsection, 
we record some important observations on 
the closures of TF equivalence classes,
which satisfy the following property.

\begin{lemma}\label{TF closure}\cite[Lemma 2.16]{A}
Let $E=[\theta]$ be the TF equivalence class of $\theta$ 
in $K_0(\AA)_\R^*$.
Then its closure $\overline{E}$ satisfies
\begin{align*}
\overline{E}=\{\eta \in K_0(\AA)_\R^* \mid
\overline{\TT}_\theta \subset \overline{\TT}_\eta, \ 
\overline{\FF}_\theta \subset \overline{\FF}_\eta\}.
\end{align*}
In particular, $\overline{E}$ is a union of TF equivalence classes.
\end{lemma}

As in Problem \ref{Prob_TF}, for each TF equivalence class $E$,
it is expected that $\overline{E}$ is a polyhedral cone in $K_0(\AA)_\R^*$,
and that $E$ is an open subset of $\overline{E}$.
If this holds, the following properties are useful to understand
the TF equivalence classes contained in $\overline{E}$.

\begin{theorem}\label{TF cone face}
The following statements hold.
\begin{enumerate}[\rm(a)]
\item
Let $E$ be a TF equivalence class in $K_0(\AA)_\R^*$, 
and $L$ a line segment contained in $\overline{E}$.
Then all elements in $L^\circ$ are TF equivalent.
\item
Let $C$ be a polyhedral cone in $K_0(\AA)_\R^*$ and $F\in\Face C$. 
If all elements in $C^\circ$ are TF equivalent, 
then all elements in $F^\circ$ are TF equivalent.
\item 
Let $E$ be a TF equivalence class in $K_0(\AA)_\R^*$
such that $\overline{E}$ is a polyhedral cone.
Then $\overline{E}$ is a union of finitely many TF equivalence classes.
\end{enumerate}
\end{theorem}

\begin{proof}
(a) 
Otherwise, by Proposition \ref{not TF stable}(a)$\Leftrightarrow$(f), there exists $S \in \AA$ such that
$H_S\cap L^\circ=\{\zeta\}$ and that $S$ is $\zeta$-stable and hence $\zeta\in\Theta_S^\circ$.
If $E\subset\Theta_S$, then $L\subset\overline{E}\subset\Theta_S$ hold, which contradict to $L\cap\Theta_S=\{\zeta\}$. 
Since $E$ is a TF equivalence class, $E\cap\Theta_S=\emptyset$ holds. 
Since $L^\circ \cap H_S$ is a point, we can take $\theta\in L^\circ\cap H_S^+$ and $\eta\in L^\circ\cap H_S^-$.
Take $\theta'\in E\cap H_S^+$ (resp.~$\eta'\in E\cap H_S^-$) which is sufficiently close to $\theta$ (resp.~$\eta$), and let $[\theta',\eta']\cap H_S=\{\zeta'\}$. Then $\zeta'\in\Theta_S$ holds since $\zeta\in\Theta_S^\circ$ and $\zeta'$ is sufficiently close to $\zeta$. Moreover, $\zeta'\in E$ holds by the convexity of $E$. Thus $E\cap\Theta_S\neq\emptyset$ holds, a contradiction.

(b) 
For each $\theta,\eta\in F^\circ$, 
there exists a line segment $L\subset F$ such that 
$\theta,\eta\in L^\circ$.
Since $L \subset F \subset C=\overline{C^\circ}$ and
$C^\circ$ is contained in a single TF equivalence class,
we can apply (a). 
Thus $\theta$ and $\eta$ are TF equivalent.

(c)
Since $\overline{E}$ is a polyhedral cone, 
$\overline{E}$ has only finitely many faces.
Thus (b) and Lemma \ref{TF closure} imply the assertion.
\end{proof}

\subsection{Relationship with silting theory}\label{prelim silt}

In this subsection, we recall the relationship 
between silting theory and the TF equivalence.

Let $A$ be a finite dimensional algebra over a field $k$. Let $\AA:=\mod A$, and $S(1),\ldots,S(n)$ be the isoclasses of simple $A$-modules.
Let $\proj A$ be the category of 
finitely generated right projective $A$-modules,
and $P(1),\ldots,P(n)$ the isoclasses of
indecomposable projective $A$-modules such that 
$P(i)$ is the projective cover of $S(i)$.
We regard $\proj A$ as an exact category with the split short exact sequences.
Then the Grothendieck group $K_0(\proj A)$ is
the free abelian group $\bigoplus_{i=1}^n \Z[P(i)]$,
so the real Grothendieck group $K_0(\proj A)_\R^*=K_0(\proj A) \otimes_\Z \R$
is the $\R$-vector space $\bigoplus_{i=1}^n \R[P(i)]$.
We define the \emph{Euler bilinear form} 
as the non-degenerate $\R$-bilinear form
\begin{align*}
\langle !,? \rangle \colon K_0(\proj A)_\R \times K_0(\mod A)_\R &\to \R, \\
([P],[X]) &\mapsto\dim_k\Hom_A(P,X).
\end{align*}
This is non-degenerate since $\langle[P(i)],[S(j)]\rangle=\delta_{i,j}d_j$ holds for each $i,j$, where $d_i:=\dim_k \End_A(S(i))$.
Thus we identify $K_0(\proj A)_\R$ and $K_0(\AA)_\R^*$ via the isomorphism $K_0(\proj A)_\R \to K_0(\AA)_\R^*$
given by $\theta \mapsto \langle \theta,? \rangle$.

We recall the definitions of some basic notions in silting theory.
We write $\KKK^{\bo}(\proj A)$ for the homotopy category 
of bounded complexes over $\proj A$.
A full subcategory $\CC$ of $\KKK^{\bo}(\proj A)$
is said to be \emph{thick} 
if it is closed under mapping cones, $[\pm1]$ and direct summands.

\begin{definition}\cite[5.1]{KV}
Let $U$ be a complex in $\KKK^{\bo}(\proj A)$.
We say that $U$ is 
\emph{presilting} if $\Hom_{\KKK^{\bo}(\proj A)}(U,U[>0])=0$, and \emph{silting} if $U$ is presilting and 
$\KKK^{\bo}(\proj A)$ is the smallest thick subcategory containing $U$.
We say that $U$ is \emph{2-term}
if $U$ is isomorphic to a complex $U^{-1} \to U^0$
whose terms except the $-1$st and $0$th ones are zero.
\end{definition}

We write $\twopresilt A$ (resp.~$\twosilt A$)
for the set of isoclasses of basic 2-term presilting 
(resp.~2-term silting) complexes in $\KKK^{\bo}(\proj A)$.
For each $U \in \twopresilt A$,
we denote by $|U|$ the number of isoclasses of its direct summands
in $\KKK^{\bo}(\proj A)$.
Thus $|A|=n$ holds.
For each $U \in \twopresilt A$, we define 
\begin{align*}
C^\circ(U):=\sum_{i=1}^m \R_{>0}[U_i], \quad
C(U):=\sum_{i=1}^m \R_{\ge 0}[U_i].
\end{align*}
We define the \emph{$g$-fan} on $K_0(\proj A)_\R$ by
\begin{align*}
\Sigma_A:=\{C(U) \mid U \in \twopresilt A\}.
\end{align*}
Then basic properties in silting theory can be summarized as follows \cite{AIR,Ai,DIJ,A,DF}.
We denote by $\brick A$ the set of isoclasses of bricks in $\mod A$.

\begin{proposition}\label{nonsingular}
\cite[Proposition 4.2]{AHIKM}
The $g$-fan $\Sigma_A$ satisfy the following statements.
\begin{enumerate}[\rm(a)]
\item $\Sigma_A$ is a nonsingular fan in $K_0(\proj A)_\R$.
\item Any cone in $\Sigma_A$ is 
a face of a cone of dimension $n$.
\item Any cone in $\Sigma_A$ of dimension $n-1$ 
is a face of precisely two cones of dimension $n$.
\item 
$\Sigma_A$ is complete if and only if $\Sigma_A$ is finite
if and only if $\brick A$ is finite.
\end{enumerate}
\end{proposition}

Let $\nu:=-\otimes_A(DA):\KKK^{\bo}(\proj A)\simeq\KKK^{\bo}(\inj A)$ be the Nakayama functor.
The $g$-fan is strongly related to the TF equivalence as follows; see also \cite{BST}.

\begin{proposition}\label{silting TF}
\cite[Proposition 3.3]{Y} \cite[Proposition 3.11]{A}
Let $U \in \twopresilt A$ and $\theta\in C^\circ(U)$.
\begin{enumerate}[\rm (a)]
\item
We have $(\overline{\TT}_\theta,\FF_\theta)=
({^\perp H^{-1}(\nu U)},\Sub H^{-1}(\nu U))$ and 
$(\TT_\theta,\overline{\FF}_\theta)=
(\Fac H^0(U),H^0(U)^\perp)$.
\item $C^\circ(U)$ is a TF equivalence class in $K_0(\proj A)_\R$. 
\item 
If $U \in \twosilt A$, then 
we have $(\overline{\TT}_\theta,\FF_\theta)=
(\TT_\theta,\overline{\FF}_\theta)$ and $\WW_\theta=\{0\}$.
\end{enumerate}
\end{proposition}

Therefore we have an injection
\begin{align*}
\twopresilt A \to 
\{\text{TF equivalence classes in $K_0(\proj A)_\R$}\},\ U\mapsto C^\circ(U).
\end{align*}
In particular, the set 
\begin{align*}
\overline{\Sigma}_A:=\{\overline{E}\mid \mbox{$E$: TF equivalence class}\}
\end{align*}
can be regarded as a completion of $\Sigma_A$ in the following sense:
\begin{enumerate}[\rm (a)]
\item 
The inclusion $\Sigma_A\subset\overline{\Sigma}_A$ holds.
\item 
The set $\overline{\Sigma}_A$ consists of cones in $K_0(\proj A)_\R$ such that $\bigcup_{\sigma\in\overline{\Sigma}_A}=K_0(\proj A)_\R$;
here a \emph{cone} means a nonempty subset closed under taking
addition and multiple by positive scalars.
\end{enumerate}
As in Problem \ref{Prob_TF}, it is still open whether
every cone in $\overline{\Sigma}_A$ is a polyhedral cone.

\section{$M$-TF equivalences}

Recall that $\AA$ is an abelian length category  
with only finitely many isoclasses
$S(1),\ldots,S(n)$ of simple objects.
Then the real Grothendieck group $K_0(\AA)_\R=K_0(\AA) \otimes_\Z \R$
is the $\R$-vector space $\bigoplus_{i=1}^n \R[S(i)] \simeq \R^n$.
We mainly consider its dual real Grothendieck group
$K_0(\AA)_\R^*=\Hom_\R(K_0(\AA),\R)
=\bigoplus_{i=1}^n \R[S(i)]^* \simeq \R^n$,
where $[S(1)]^*,\ldots,[S(n)]^* \in K_0(\AA)_\R^*$ 
is the dual basis of $[S(1)],\ldots,[S(n)] \in K_0(\AA)_\R$.

\subsection{The definition and the main results}\label{Subsec def M-TF}

In this subsection, for each object $M \in \AA$,
we define the $M$-TF equivalence by coarsening the TF equivalence,
and construct a generalized fan $\Sigma(M)$ 
in $K_0(\AA)_\R^*$.

For $\theta \in K_0(\AA)_\R^*$, Proposition \ref{numerical torsion pair} shows that
there exist short exact sequences
\begin{align*}
&0 \to \overline{\rmt}_\theta M \to M  \to \rmf_\theta M \to 0
\quad (\overline{\rmt}_\theta M \in \overline{\TT}_\theta, \ 
\rmf_\theta M \in \FF_\theta), \\
&0 \to \rmt_\theta M \to M \to \overline{\rmf}_\theta M \to 0
\quad (\rmt_\theta M \in \TT_\theta, \ 
\overline{\rmf}_\theta M \in \overline{\FF}_\theta).
\end{align*}
We set $\rmw_\theta M:=\overline{\rmt}_\theta M/\rmt_\theta M$.
Then we have $\rmw_\theta M \in \overline{\TT}_\theta \cap \overline{\FF}_\theta =\WW_\theta$.

Recall from Definition \ref{def supp} that we have set
\begin{align*}
\supp_\theta X:=\{ \text{isoclasses of composition factors of $X$ in $\WW_\theta$}\}
\end{align*}
for $X\in\WW_\theta$.
By using these, 
we introduce the $M$-TF equivalence for each $M \in \AA$ as follows.

\begin{definition}\label{define M-TF}
Let $M \in \AA$. 
\begin{enumerate}[\rm (a)]
\item
Let $\theta,\eta \in K_0(\AA)_\R^*$.
Then we say that $\theta$ and $\eta$ are 
\emph{$M$-TF equivalent} if the following conditions are satisfied.
\begin{enumerate}[\rm (i)]
\item
The conditions
$\rmt_\theta M=\rmt_\eta M$, $\rmw_\theta M=\rmw_\eta M$ and 
$\rmf_\theta M=\rmf_\eta M$ hold, or equivalently, 
$(\overline{\rmt}_\theta M,\rmf_\theta M)=
(\overline{\rmt}_\eta M,\rmf_\eta M)$ and 
$(\rmt_\theta M,\overline{\rmf}_\theta M)=
(\rmt_\eta M,\overline{\rmf}_\eta M)$ hold.
\item
We have $\supp_\theta(\rmw_\theta M)=\supp_\eta(\rmw_\eta M)$.
Under the condition (i), this is equivalent to that
the subobjects of $\rmw_\theta M$ in $\WW_\theta$ are precisely the subobjects of $\rmw_\eta M$ in $\WW_\eta$.
\end{enumerate}
\item We write $[\theta]_M$ for the $M$-TF equivalence class of $\theta$.
For each $E=[\theta]_M$, we set 
\begin{align*}
&\rmt_E M:=\rmt_\theta M, \quad \rmw_E M:=\rmw_\theta M, \quad 
\rmf_E M:=\rmf_\theta M, \quad
\overline{\rmt}_E M:=\overline{\rmt}_\theta M, \quad 
\overline{\rmf}_E M:=\overline{\rmf}_\theta M, \\
&\supp_E(\rmw_E M):=\supp_\theta(\rmw_\theta M).
\end{align*}
Notice that $\rmt_EX$, $\rmw_EX$ and $\rmf_EX$ for other objects $X\in\AA$ are not defined.
\item
We define $\TF(M)$ as the set of all $M$-TF equivalence classes, and 
\begin{align*}
\Sigma(M):=\{ \overline{E} \mid E \in \TF(M) \}
\end{align*}
as the set of the closures of all $E \in \TF(M)$.
\end{enumerate}
\end{definition}

We almost immediately have the following properties
from the definition of $M$-TF equivalences.
We write $\brick \AA$ for the set of isoclasses in $\AA$.

\begin{proposition}\label{M-TF indec}
Let $\theta,\eta \in K_0(\AA)_\R^*$.
\begin{enumerate}[\rm (a)]
\item
Assume $M=M_1 \oplus M_2 \in \AA$. 
Then $\theta$ and $\eta$ are $M$-TF equivalent if and only if
they are $M_1$-TF equivalent and are $M_2$-TF equivalent.
\item
The elements $\theta$ and $\eta$ are TF equivalent if and only if 
$\theta$ and $\eta$ are $M$-TF equivalent for any $M \in \AA$
if and only if 
$\theta$ and $\eta$ are $M$-TF equivalent for any $M \in \brick \AA$.
\end{enumerate}
\end{proposition}

\begin{proof}
(a) is clear, because $\rmt_\theta M=\rmt_\theta M_1 \oplus \rmt_\theta M_2$
and similar properties hold.

(b)
The two ``only if'' parts are clear.
It remains to show that, if
$\theta$ and $\eta$ are $M$-TF equivalent for any brick $M$,
then $\theta$ and $\eta$ are TF equivalent.

Let $M \in \TT_\theta$ be a brick.
Then by assumption, $M=\rmt_\theta M=\rmt_\eta M\in\TT_\eta$. Thus all bricks in $\TT_\theta$ belong to $\TT_\eta$. Since $\TT_\theta$ is the smallest torsion class containing $\TT_\theta\cap\brick \AA$ \cite[Lemma 3.9]{DIRRT}, we have $\TT_\theta \subset \TT_\eta$.
By symmetry, we get $\TT_\theta=\TT_\eta$. Dually, $\FF_\theta=\FF_\eta$ holds, and hence $\theta$ and $\eta$ are TF equivalent.
\end{proof}

In the rest of this subsection, we give our results on 
$\TF(M)$ and $\Sigma(M)$, which will be proved
in Subsection \ref{Subsec proof}.
The following first one is natural but nontrivial.

\begin{theorem}\label{TF(M) Sigma(M)}
Let $M \in \AA$. Then the following statements hold.
\begin{enumerate}[\rm (a)]
\item 
$\Sigma(M)$ is a finite set of rational polyhedral cones 
in $K_0(\AA)_\R^*$.
\item 
We have mutually inverse bijections $\TF(M) \simeq \Sigma(M)$
of finite sets given by $E \mapsto \overline{E}$ for any $E \in \TF(M)$
and $\sigma \mapsto \sigma^\circ$ for any $\sigma \in \Sigma(M)$.
\end{enumerate}
\end{theorem}

Our aim of this paper is to understand $\Sigma(M)$ 
via the Newton polytope $\rmN(M)$.

\begin{definition}\cite[Subsection 1.3]{BKT}
\cite[Definition 2.3]{Fei1}\label{define newton}
Let $M \in \AA$.
The \emph{Newton polytope} $\rmN(M)$ is defined as
\begin{align*}
\rmN(M):=
\conv \{[X] \mid \text{$X$ is a subobject of $M$}\} \subset K_0(\AA)_\R.
\end{align*}
\end{definition} 

By Definition-Proposition \ref{Sigma(P)}, we can consider 
the normal generalized fan $\Sigma(\rmN(M))$ in $K_0(\AA)_\R^*$, 
which was studied in \cite{AHIKM,BKT,Fei2}.
For each $i \in \{0,\ldots,n\}$,
we write $\Face_i \rmN(M)$ for the set of all $i$-dimensional faces
of $\rmN(M)$ as in Definition \ref{polytope def}.
In particular, $\Face_0 \rmN(M)$ (resp.~$\Face_1 \rmN(M)$)
is the set of vertices (resp.~edges) in $\rmN(M)$.

\begin{theorem}\label{Sigma(M)=Sigma(N(M))}
Let $M \in \AA$.
\begin{enumerate}[\rm (a)]
\item
We have $\Sigma(M)=\Sigma(\rmN(M))$.
In particular, $\Sigma(M)$ is a rational generalized fan
which is finite and complete.
\item
For each $i \in \{0,\ldots,n\}$, there exist mutually inverse order-reversing bijections
\begin{align*}
\Face_{n-i} \rmN(M) \leftrightarrow \Sigma_i(M)
\end{align*}
given by $F \mapsto \sigma_F$ for each $F \in \Face \rmN(M)$ and
$\sigma \mapsto \rmN(M)_\theta$ for each $\sigma \in \Sigma(M)$ 
taking $\theta \in \sigma^\circ$.
In particular, each $F \in \Face \rmN(M)$ satisfies
$\dim_\R \sigma_F=n-\dim_\R F$. 
\end{enumerate}
\end{theorem}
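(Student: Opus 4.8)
\textbf{Proof proposal for Theorem \ref{Sigma(M)=Sigma(N(M))}.}
The plan is to deduce both parts essentially formally from the characterizations already assembled, with Lemma \ref{M-TF N(M)_theta} doing the central work. First I would prove part (a). The inclusion $\Sigma(M) \subset \Sigma(\rmN(M))$: take $\sigma \in \Sigma(M)$ and write $\sigma = \overline{E}$ for $E \in \TF(M)$; by Lemma \ref{M-TF N(M)_theta}(b) we have $\overline{E} = \sigma_{\rmN(M)_E}$, and since $\rmN(M)_E \in \Face \rmN(M)$ is well-defined (again Lemma \ref{M-TF N(M)_theta}(b)), this exhibits $\sigma$ as an element of $\Sigma(\rmN(M))$ by Definition-Proposition \ref{Sigma(N(M))}(b). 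Conversely, given $\sigma \in \Sigma(\rmN(M))$, use Definition-Proposition \ref{Sigma(N(M))}(b) to write $\sigma = \sigma_{\rmN(M)_\theta}$ for some $\theta \in K_0(\AA)_\R^*$; set $E := [\theta]_M \in \TF(M)$. By Lemma \ref{M-TF N(M)_theta}(a), the $M$-TF equivalence class of $\theta$ is exactly the set of $\eta$ with $\rmN(M)_\eta = \rmN(M)_\theta$, and by Definition-Proposition \ref{Sigma(N(M))}(c)-(d) this is precisely $(\sigma_{\rmN(M)_\theta})^\circ = \sigma^\circ$; hence $\overline{E} = \sigma$ and $\sigma \in \Sigma(M)$. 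This gives $\Sigma(M) = \Sigma(\rmN(M))$, and the "in particular" clause follows since $\Sigma(\rmN(M))$ is a finite complete fan by Definition-Proposition \ref{Sigma(N(M))}(b). Note this simultaneously completes the proof of Theorem \ref{Sigma(M) fan}(b), which was deferred.

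For part (b), I would simply transport the bijection of Definition-Proposition \ref{Sigma(N(M))}(c) along the equality $\Sigma(M) = \Sigma(\rmN(M))$ from (a). That statement already gives mutually inverse order-reversing bijections $\Face \rmN(M) \leftrightarrow \Sigma(\rmN(M))$ with $F \mapsto \sigma_F$ and $\sigma \mapsto \rmN(M)_\theta$ for $\theta \in \sigma^\circ$, together with the dimension formula $\dim_\R \sigma_F = n - \dim_\R F$. The only thing to check is that the inverse map can be rewritten in the form claimed in the theorem, namely $\sigma \mapsto \rmN(M)_E$ with $E := \sigma^\circ$. But by part (a), $E = \sigma^\circ$ is an $M$-TF equivalence class, and $\rmN(M)_E$ is well-defined by Lemma \ref{M-TF N(M)_theta}(b); moreover for any $\theta \in \sigma^\circ = E$ we have $\rmN(M)_\theta = \rmN(M)_E$ by definition of $\rmN(M)_E$ (Lemma \ref{M-TF N(M)_theta}(a) shows $\rmN(M)_\theta$ is constant on $E$). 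So the two descriptions of the inverse map agree, and part (b) is immediate.

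I do not expect a serious obstacle here: the theorem is a repackaging of Lemma \ref{M-TF N(M)_theta} and Definition-Proposition \ref{Sigma(N(M))}, and all the analytic content (that $E$ is open in a hyperplane, that $\overline{E} = \sigma_{\rmN(M)_E}$, that $\LL(\theta,M)$ matches Feigin's definition via $\rmN(M)_\theta$) has already been established. The one place to be slightly careful is the direction $\Sigma(\rmN(M)) \subset \Sigma(M)$: one must confirm that \emph{every} face $F \in \Face \rmN(M)$ arises as $\rmN(M)_\theta$ for some $\theta$ — this is exactly the surjectivity statement in Definition-Proposition \ref{Sigma(N(M))}(a) — so that $\sigma_F$ is genuinely of the form $\overline{[\theta]_M}$. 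If that surjectivity were only known for faces of maximal dimension or similar, one would need to fill a gap, but as stated in the excerpt it covers all faces, so the argument closes cleanly.
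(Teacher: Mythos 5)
Your proposal is correct and follows essentially the same route as the paper: both proofs reduce Theorem \ref{Sigma(M)=Sigma(N(M))} to Lemma \ref{M-TF N(M)_theta} and Definition-Proposition \ref{Sigma(N(M))}, with the paper phrasing part (a) as the coincidence of two bijections $\TF(M)\to\Sigma(\rmN(M))$ and $\TF(M)\to\Sigma(M)$, whereas you unwind the same content as a double-inclusion argument. The only cosmetic difference is that in the reverse inclusion you reprove a bit of Lemma \ref{M-TF N(M)_theta}(b) from (a) and Definition-Proposition \ref{Sigma(N(M))}(c)--(d), where one could invoke \ref{M-TF N(M)_theta}(b) directly: with $E:=[\theta]_M$ one has $\overline{E}=\sigma_{\rmN(M)_E}=\sigma_{\rmN(M)_\theta}=\sigma$, giving $\sigma\in\Sigma(M)$ in one step.
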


Since $\Sigma(M)=\Sigma(\rmN(M))$,
$\Sigma(M)$ is a fan (that is, every cone is strongly convex)
if and only if $\rmN(M)$ is full-dimensional
if and only if $M$ is sincere.
The smallest element of $\Sigma(M)$ is explicitly given 
in Proposition \ref{smallest}.

As a corollary of Theorem \ref{Sigma(M)=Sigma(N(M))}, 
we have the following properties.

\begin{corollary}\label{M-TF decompose}
Let $M \in \AA$, $E \in \TF(M)$, 
$\sigma:=\overline{E}$.
\begin{enumerate}[\rm (a)]
\item
As generalized fans, 
$\Face \sigma$ is a subfan of $\Sigma(M)$.
\item
For each $\sigma' \in \Face \sigma$,
we have $\sigma' \in \Sigma(M)$ and $(\sigma')^\circ \in \TF(M)$.
Therefore the decomposition
\begin{align*}
\sigma=\bigsqcup_{\tau \in \Face \sigma}\tau^\circ
\end{align*}
is also the decomposition to $M$-TF equivalence classes.
\item
Let $\sigma' \in \Face \sigma$, and set $E':=(\sigma')^\circ$.
Then we have 
\begin{align*}
\sigma'=\{ \theta \in \sigma \mid \theta(\supp_{E'}(\rmw_{E'} M))=0\}.
\end{align*}
\end{enumerate}
\end{corollary}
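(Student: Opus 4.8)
The plan is to read off both statements from the fact, already established in Theorem~\ref{Sigma(M) fan}(b) (equivalently Theorem~\ref{Sigma(M)=Sigma(N(M))}(a)), that $\Sigma(M)$ is a finite complete fan, together with the explicit descriptions in Lemma~\ref{E TWF}. No new input about torsion pairs or semistable subcategories is needed.

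For (a): since $\sigma=\overline E\in\Sigma(M)$ and a fan is by definition closed under passing to faces, we get $\sigma'\in\Face\sigma\subset\Sigma(M)$. Applying the bijection $\Sigma(M)\to\TF(M)$, $\tau\mapsto\tau^\circ$, of Theorem~\ref{Sigma(M) fan}(a) shows $(\sigma')^\circ\in\TF(M)$, i.e.\ $(\sigma')^\circ$ is an $M$-TF equivalence class. Running this for every $\tau\in\Face\sigma$ identifies each piece of the standard face decomposition $\sigma=\bigsqcup_{\tau\in\Face\sigma}\tau^\circ$ of the cone $\sigma$ (see \cite{Fu}) with an $M$-TF equivalence class; since these pieces partition $\sigma$ and are entire classes, this is precisely the decomposition of $\sigma$ into $M$-TF equivalence classes.

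For (b): put $E':=(\sigma')^\circ\in\TF(M)$, so $\overline{E'}=\sigma'$ by Theorem~\ref{Sigma(M) fan}(a). Lemma~\ref{E TWF}(a) applied to $E'$ says that $E'$ is a nonempty open subset of the linear subspace $\Kernel\langle ?,\supp_{E'}(\rmw_{E'}M)\rangle$. A nonempty open subset of a linear subspace spans it, and $\sigma'=\overline{E'}$, so $\R\sigma'=\Kernel\langle ?,\supp_{E'}(\rmw_{E'}M)\rangle=\{\theta\in K_0(\AA)_\R^*\mid\theta(\supp_{E'}(\rmw_{E'}M))=0\}$. On the other hand $\sigma'$ is a face of the rational polyhedral cone $\sigma$, hence is exposed and so equals $\sigma$ intersected with its own linear span, i.e.\ $\sigma'=\sigma\cap\R\sigma'$ (cf.\ \cite{Fu}). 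Combining the last two identities yields $\sigma'=\{\theta\in\sigma\mid\theta(\supp_{E'}(\rmw_{E'}M))=0\}$, as desired.

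The whole argument is essentially bookkeeping once Theorem~\ref{Sigma(M) fan} and Lemma~\ref{E TWF} are in hand; the only point that needs a little care is the use, in (b), of the classical fact that a face $\sigma'$ of a polyhedral cone $\sigma$ satisfies $\sigma'=\sigma\cap\R\sigma'$ (every face of a polyhedral cone is exposed). If one prefers to avoid invoking this, one can instead argue inside the normal-fan picture: writing $\sigma=\sigma_F$ and $\sigma'=\sigma_{F'}$ with $F:=\rmN(M)_E\subset F':=\rmN(M)_{E'}$ in $\Face\rmN(M)$ (the inclusion coming from $\sigma'\subset\sigma$ and the order-reversing bijection of Theorem~\ref{Sigma(M)=Sigma(N(M))}(b)), and using Lemma~\ref{N(M)_theta}(b) to identify the direction space of $F'$ with $\R\supp_{E'}(\rmw_{E'}M)$, one checks that for $\theta\in\sigma$ the condition $F'\subset\rmN(M)_\theta$ is equivalent to $\theta$ being constant on $F'$, i.e.\ to $\theta(\supp_{E'}(\rmw_{E'}M))=0$; since $\sigma'=\sigma_{F'}=\{\theta\mid F'\subset\rmN(M)_\theta\}$ by Definition-Proposition~\ref{Sigma(N(M))}(b), this gives the claim.
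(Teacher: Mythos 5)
Your proof is correct and follows essentially the same route as the paper: part (a) is the same appeal to the fan property plus the bijection $\TF(M)\leftrightarrow\Sigma(M)$, and in part (b) the paper likewise reduces, via $\sigma'\in\Face\sigma$, to showing $\R\sigma'=\Kernel\langle ?,\supp_{E'}(\rmw_{E'}M)\rangle$, which it gets from Lemma~\ref{E TWF}(a) applied to $E'=(\sigma')^\circ$. The only cosmetic difference is that you spell out the background fact $\sigma'=\sigma\cap\R\sigma'$ for faces of polyhedral cones (and sketch a normal-fan alternative), whereas the paper invokes it implicitly.
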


In Definition \ref{Theta_M}, 
we defined the wall-chamber structure on $K_0(\AA)_\R^*$.
Applying our results to the wall $\Theta_M$ 
for each $M \in \AA \setminus \{0\}$,
we have the following properties.

\begin{corollary}\label{Theta_M Sigma(M)}
Let $M \in \AA \setminus \{0\}$.
\begin{enumerate}[\rm(a)]
\item 
The vertices $\{0\}$ and $\{[M]\}$ in $\Face_0 \rmN(M)$ correspond to
$\sigma_-:=\{\theta \in K_0(\AA)_\R^* \mid M \in \overline{\FF}_\theta\}$
and 
$\sigma_+:=\{\theta \in K_0(\AA)_\R^* \mid M \in \overline{\TT}_\theta\}$
in $\Sigma_n(M)$, respectively.
\item 
Set $F \in \Face \rmN(M)$ as the smallest face such that $0,[M] \in F$.
Then we have $\Theta_M=\sigma_- \cap \sigma_+=\sigma_F \in \Sigma(M)$ and 
$\Theta_M^\circ \in \TF(M)$.
\item
The bijections $\Face \rmN(M) \leftrightarrow \Sigma(M)$ 
in Theorem \ref{Sigma(M)=Sigma(N(M))}(b) restrict to
\begin{align*}
\{F' \in \Face \rmN(M) \mid 0,[M] \in F'\} \leftrightarrow \Face \Theta_M.
\end{align*}
\item 
As generalized fans,
$\Face\Theta_M$ is a subfan of $\Sigma(M)$.
\item 
For each $\sigma \in \Face \Theta_M$, we have
$\sigma^\circ \in \TF(M)$ and $\sigma \in \Sigma(M)$.
\end{enumerate}
\end{corollary}

As in Example \ref{not simplicial}, $\Theta_M$ may not be simplicial,
so $\Sigma(M)$ is not necessarily a simplicial fan.

We will prove the results above in Subsection \ref{Subsec proof}.

In the rest of this subsection,
let $\AA=\mod A$ with $A$ a finite dimensional algebra over a field $k$.
As in Subsection \ref{prelim silt},
we identify $K_0(\proj A)_\R$ with $K_0(\mod A)_\R^*$
via the Euler bilinear form.
Together with results in silting theory, we have the following.
It is open for arbitrary abelian length categories $\AA$
with finitely many simple objects.

\begin{proposition}\label{M-TF brick all}
The algebra $A$ is brick finite if and only if
there exists $M \in \mod A$ such that the $M$-TF equivalence
coincides with the TF equivalence.
\end{proposition}

\begin{proof}
The ``only if'' part follows from Proposition \ref{M-TF indec}
by setting $M:=\bigoplus_{S \in \brick A}S$.

We prove the ``if'' part.
Take such $M \in \mod A$.
Then Theorem \ref{TF(M) Sigma(M)}(a) implies that
there are only finitely many $M$-TF equivalence classes.
By assumption, there exist only finitely TF equivalence classes.
Then Proposition \ref{silting TF}(b) implies that 
$\Sigma_A$ is a finite set,
so $A$ is brick finite by Proposition \ref{nonsingular}(d).
\end{proof}

Now fix $M \in \mod A$.
For $T \in \twosilt A$, 
the notion $\rmt_T M:=\rmt_\theta M$ is well-defined
by taking $\theta \in C^\circ(T)$,
because $C^\circ(T)$ is a TF equivalence class
by Proposition \ref{silting TF}.
This induces an equivalence relation on $\twosilt A$ as follows.

\begin{definition-proposition}
Let $A$ be a finite dimensional algebra and $M \in \mod A$. For $T,U\in\twosilt A$, the conditions below are equivalent. In this case, we write $T\sim_MU$.
\begin{enumerate}[\rm (a)]
\item 
$\rmt_TM=\rmt_UM$.
\item 
There exist
$\theta \in C^\circ(T)$ and $\eta \in C^\circ(U)$ such that $\theta$ and $\eta$ are $M$-TF equivalent.
\item 
Each $\theta \in C^\circ(T)$ and $\eta \in C^\circ(U)$ are $M$-TF equivalent.
\end{enumerate}
\end{definition-proposition}

\begin{proof}
(c)$\Rightarrow$(b) This is clear.

(b)$\Rightarrow$(a) 
Since $\theta$ and $\eta$ are $M$-TF equivalent,
we have $\rmt_\theta M=\rmt_\eta M$.
Thus we get $\rmt_T M=\rmt_U M$.

(a)$\Rightarrow$(c) 
By (a), we get $\rmt_\theta M=\rmt_\eta M$.
Proposition \ref{silting TF}(c) implies $\WW_\theta=\WW_\eta=\{0\}$,
so $\rmw_\theta M=\rmw_\eta M=0$.
This and $\rmt_\theta M=\rmt_\eta M$ give
$\overline{\rmt}_\theta M=\overline{\rmt}_\eta M$.
Thus we get $\rmf_\theta M=M/\overline{\rmt}_\theta M
=M/\overline{\rmt}_\eta M=\rmf_\theta M$.
\end{proof}

Then we immediately have the following result.

\begin{proposition}\label{connection to g-fan}
Let $A$ be a finite dimensional algebra and $M\in\mod A$. Then $\Sigma(M)$ is a completion of the coarsening of the $g$-fan $\Sigma_A$ with respect to the equivalence relation $\sim_M$.
\end{proposition}

\subsection{Examples}\label{Subsec example}
In this subsection, we give examples of $\Sigma(M)$. 
The first one below shows 
that the condition (ii) $\supp_\theta M=\supp_\eta M$ 
of Definition \ref{define M-TF}(a) is necessary to assure that $\Sigma(M)$ is a generalized fan.

\begin{example}\label{A2}
Let $A$ be the path algebra $k(1 \to 2)$,
and $M:=\begin{smallmatrix}1\\2\end{smallmatrix}$.
Then $\TF(M)$ has exactly seven elements in the following table.
\begin{center}
\begin{tabular}{l|ccc|c}
$M$-TF equivalence classes $E$ 
& $\rmt_E M$ & $\rmw_E M$ & $\rmf_E M$ & $\supp_E(\rmw_E M)$ \\
\hline
$E_1=\{0\}$ & $0$ & $M$ & $0$ & $\{S(2),S(1)\}$ \\
$E_2=\R_{>0}[S(2)]^*$ & $S(2)$ & $S(1)$ & $0$ & $\{S(1)\}$ \\
$E_3=\R_{>0}(-[S(1)]^*)$ & $0$ & $S(2)$ & $S(1)$ & $\{S(2)\}$ \\
$E_4=\R_{>0}([S(1)]^*-[S(2)]^*)$ & $0$ & $M$ & $0$ & $\{M\}$ \\
$E_5=E_4+E_2$ & $M$ & $0$ & $0$ & $\emptyset$ \\ 
$E_6=E_2+E_3$ & $S(2)$ & $0$ & $S(1)$ & $\emptyset$ \\ 
$E_7=E_3+E_4$ & $0$ & $0$ & $M$ & $\emptyset$ 
\end{tabular}
\end{center}
They are depicted as follows, and 
$\Sigma(M)=\{\overline{E_i} \mid i\in \{1,\ldots,7\}\}$ is 
indeed a rational fan which is finite and complete:
\begin{align*}
\begin{tikzpicture}[baseline=0pt,scale=1]
\node (00)[coordinate] at ( 0, 0) {};
\node (+0)[coordinate,label=  0:{$\scriptstyle{ [S(1)]^*}$}] at ( 2, 0) {};
\node (++)[coordinate]                                       at ( 2, 2) {};
\node (0+)[coordinate,label= 90:{$\scriptstyle{ [S(2)]^*}$}] at ( 0, 2) {};
\node (-+)[coordinate]                                       at (-2, 2) {};
\node (-0)[coordinate,label=180:{$\scriptstyle{-[S(1)]^*}$}] at (-2, 0) {};
\node (--)[coordinate]                                       at (-2,-2) {};
\node (0-)[coordinate,label=270:{$\scriptstyle{-[S(2)]^*}$}] at ( 0,-2) {};
\node (+-)[coordinate]                                       at ( 2,-2) {};
\fill[black!10] (00)--(+-)--(++)--(0+)--cycle;
\fill[black!30] (00)--(0+)--(-+)--(-0)--cycle;
\fill[black!20] (00)--(-0)--(--)--(+-)--cycle;
\draw[dashed,->] (00) to (+0);
\draw[dashed,->] (00) to (0+);
\draw[dashed,->] (00) to (-0);
\draw[dashed,->] (00) to (0-);
\draw[very thick] (00) to (0+);
\draw[very thick] (00) to (-0);
\draw[very thick] (00) to (+-);
\node[fill=white] (E1) at (   0,   0) {$E_1$};
\node[fill=white] (E2) at (   0, 1.2) {$E_2$};
\node[fill=white] (E3) at (-1.2,   0) {$E_3$};
\node[fill=white] (E4) at ( 1.2,-1.2) {$E_4$};
\node[fill=white] (E5) at ( 1.2, 0.6) {$E_5$};
\node[fill=white] (E6) at (-1.2, 1.2) {$E_6$};
\node[fill=white] (E7) at (-0.6,-1.2) {$E_7$};
\end{tikzpicture}.
\end{align*}

Note that $\Theta_M=E_1\sqcup E_4$ holds. Moreover
the two $M$-TF equivalence classes $E_1$ and $E_4$
are distinguished only by $\supp_\theta(\rmw_\theta M)$.
If we consider the equivalence relation on $K_0(\AA)_\R^*$
defined by the condition (i) in Definition \ref{define M-TF}(a) alone,
then the equivalence classes are $E_1 \sqcup E_4=\overline{E_4}$
and $E_2,E_3,E_5,E_6,E_7$.
The set of the closures of these six sets is 
$\Sigma(M) \setminus \{\{0\}\}$.
This is not closed under taking faces,
because $\{0\}$ is a face of $\overline{E_4}$.

Moreover $\Theta_M=\overline{E_7} \cap \overline{E_5}$
holds as in Corollary \ref{Theta_M Sigma(M)}.
\end{example}

Examples \ref{A2 wall} and \ref{A2} show that 
$\Sigma(M)$ is not necessarily given by a union of walls.

We have another example,
which implies that 
the $M$-TF equivalence for an object $M \in \AA$ which is not a brick
can be different from the $M'$-TF equivalence
for any object $M'$ which is a direct sum of bricks.
Thus it is meaningful to consider $M$-TF equivalences 
also for objects $M \in \AA$ which are not bricks.

\begin{example}
Define a finite dimensional algebra $A$ and a module $M \in \mod A$ by 
\begin{align*}
A:=k \left( \begin{tikzpicture}[baseline=(1.base),->]
\node (1) at (0, 0) {$1$};
\node (2) at (2, 0) {$2$};
\draw [thick,transform canvas={shift={( 90:0.1cm)}}] 
(1) to [edge label={\scriptsize $a$}] (2);
\draw [thick,transform canvas={shift={(270:0.1cm)}}]
(2) to [edge label={\scriptsize $b$}] (1);
\end{tikzpicture} \right)/\langle aba,bab \rangle, \quad
M:=\begin{smallmatrix}1\\2\\1\end{smallmatrix}.
\end{align*}
Then the $M$-TF equivalence classes are depicted as follows:
\begin{align*}
\begin{tikzpicture}[baseline=0pt,scale=0.8]
\node (00)[coordinate] at ( 0, 0) {};
\node (+0)[coordinate,label=  0:{$\scriptstyle{ [S(1)]^*}$}] at ( 2, 0) {};
\node (++)[coordinate]                                       at ( 2, 2) {};
\node (0+)[coordinate,label= 90:{$\scriptstyle{ [S(2)]^*}$}] at ( 0, 2) {};
\node (-+)[coordinate]                                       at (-2, 2) {};
\node (-0)[coordinate,label=180:{$\scriptstyle{-[S(1)]^*}$}] at (-2, 0) {};
\node (--)[coordinate]                                       at (-2,-2) {};
\node (0-)[coordinate,label=270:{$\scriptstyle{-[S(2)]^*}$}] at ( 0,-2) {};
\node (+-)[coordinate]                                       at ( 2,-2) {};
\fill[black!10] (00)--(+-)--(++)--(0+)--cycle;
\fill[black!30] (00)--(0+)--(-+)--cycle;
\fill[black!20] (00)--(0-)--(--)--(-+)--cycle;
\fill[black!40] (00)--(0-)--(+-)--cycle;
\draw[dashed,->] (00) to (+0);
\draw[dashed,->] (00) to (0+);
\draw[dashed,->] (00) to (-0);
\draw[dashed,->] (00) to (0-);
\draw[very thick] (0+) to (0-);
\draw[very thick] (-+) to (+-);
\end{tikzpicture}.
\end{align*}
There are exactly four isoclasses of bricks 
$\begin{smallmatrix}1\end{smallmatrix}$,
$\begin{smallmatrix}2\end{smallmatrix}$,
$\begin{smallmatrix}1\\2\end{smallmatrix}$ and
$\begin{smallmatrix}2\\1\end{smallmatrix}$ in $\mod A$,
and the $S$-TF equivalence classes for each brick $S$ is given as follows:
\begin{align*}
\begin{tikzpicture}[baseline=0pt,scale=0.7]
\node (00)[coordinate] at ( 0, 0) {};
\node (+0)[coordinate] at ( 2, 0) {};
\node (++)[coordinate] at ( 2, 2) {};
\node (0+)[coordinate] at ( 0, 2) {};
\node (-+)[coordinate] at (-2, 2) {};
\node (-0)[coordinate] at (-2, 0) {};
\node (--)[coordinate] at (-2,-2) {};
\node (0-)[coordinate] at ( 0,-2) {};
\node (+-)[coordinate] at ( 2,-2) {};
\fill[black!10] (++)--(+-)--(0-)--(0+)--cycle;
\fill[black!20] (-+)--(--)--(0-)--(0+)--cycle;
\draw[dashed,->] (00) to (+0);
\draw[dashed,->] (00) to (0+);
\draw[dashed,->] (00) to (-0);
\draw[dashed,->] (00) to (0-);
\draw[very thick] (0-) to (0+);
\end{tikzpicture} \quad
\begin{tikzpicture}[baseline=0pt,scale=0.7]
\node (00)[coordinate] at ( 0, 0) {};
\node (+0)[coordinate] at ( 2, 0) {};
\node (++)[coordinate] at ( 2, 2) {};
\node (0+)[coordinate] at ( 0, 2) {};
\node (-+)[coordinate] at (-2, 2) {};
\node (-0)[coordinate] at (-2, 0) {};
\node (--)[coordinate] at (-2,-2) {};
\node (0-)[coordinate] at ( 0,-2) {};
\node (+-)[coordinate] at ( 2,-2) {};
\fill[black!10] (++)--(-+)--(-0)--(+0)--cycle;
\fill[black!20] (+-)--(--)--(-0)--(+0)--cycle;
\draw[dashed,->] (00) to (+0);
\draw[dashed,->] (00) to (0+);
\draw[dashed,->] (00) to (-0);
\draw[dashed,->] (00) to (0-);
\draw[very thick] (-0) to (+0);
\end{tikzpicture} \quad
\begin{tikzpicture}[baseline=0pt,scale=0.7]
\node (00)[coordinate] at ( 0, 0) {};
\node (+0)[coordinate] at ( 2, 0) {};
\node (++)[coordinate] at ( 2, 2) {};
\node (0+)[coordinate] at ( 0, 2) {};
\node (-+)[coordinate] at (-2, 2) {};
\node (-0)[coordinate] at (-2, 0) {};
\node (--)[coordinate] at (-2,-2) {};
\node (0-)[coordinate] at ( 0,-2) {};
\node (+-)[coordinate] at ( 2,-2) {};
\fill[black!10] (00)--(+-)--(++)--(0+)--cycle;
\fill[black!30] (00)--(0+)--(-+)--(-0)--cycle;
\fill[black!20] (00)--(-0)--(--)--(+-)--cycle;
\draw[dashed,->] (00) to (+0);
\draw[dashed,->] (00) to (0+);
\draw[dashed,->] (00) to (-0);
\draw[dashed,->] (00) to (0-);
\draw[very thick] (00) to (0+);
\draw[very thick] (00) to (-0);
\draw[very thick] (00) to (+-);
\end{tikzpicture} \quad
\begin{tikzpicture}[baseline=0pt,scale=0.7]
\node (00)[coordinate] at ( 0, 0) {};
\node (+0)[coordinate] at ( 2, 0) {};
\node (++)[coordinate] at ( 2, 2) {};
\node (0+)[coordinate] at ( 0, 2) {};
\node (-+)[coordinate] at (-2, 2) {};
\node (-0)[coordinate] at (-2, 0) {};
\node (--)[coordinate] at (-2,-2) {};
\node (0-)[coordinate] at ( 0,-2) {};
\node (+-)[coordinate] at ( 2,-2) {};
\fill[black!10] (00)--(-+)--(--)--(0-)--cycle;
\fill[black!30] (00)--(0-)--(+-)--(+0)--cycle;
\fill[black!20] (00)--(+0)--(++)--(-+)--cycle;
\draw[dashed,->] (00) to (-0);
\draw[dashed,->] (00) to (0-);
\draw[dashed,->] (00) to (+0);
\draw[dashed,->] (00) to (0+);
\draw[very thick] (00) to (0-);
\draw[very thick] (00) to (+0);
\draw[very thick] (00) to (-+);
\end{tikzpicture}.
\end{align*}
Thus if $M'$ is a direct sum of bricks, 
then the $M'$-TF equivalence never coincides 
with the $M$-TF equivalence by Proposition \ref{M-TF indec}(a).
\end{example}

\subsection{Proof of the main results}\label{Subsec proof}

This subsection is devoted to proving the results in 
Subsection \ref{Subsec def M-TF}.
We begin with the following observation.

\begin{lemma}\label{t_eta M/t_theta M}
Let $M \in \AA$.
Assume that $\theta,\eta \in K_0(\AA)_\R^*$ satisfies 
$\theta \in \overline{[\eta]_M}$.
\begin{enumerate}[\rm(a)]
\item 
We have $\rmt_\eta M,\overline{\rmt}_\eta M \in \overline{\TT}_\theta$,
$\rmf_\eta M,\overline{\rmf}_\eta M \in \overline{\FF}_\theta$
and $\supp_\eta(\rmw_\eta M) \subset \WW_\theta$.
Therefore any subobject of $\rmw_\eta M$ in $\WW_\eta$ belongs to 
$\WW_\theta$.
\item 
We have a sequence
\begin{align*}
\rmt_\theta M \subset \rmt_\eta M \subset \overline{\rmt}_\eta M
\subset \overline{\rmt}_\theta M
\end{align*}
whose subfactors $\rmt_\eta M/\rmt_\theta M$, 
$\overline{\rmt}_\eta M/\rmt_\eta M$ and 
$\overline{\rmt}_\theta M/\overline{\rmt}_\eta M$ belong to $\WW_\theta$.
\end{enumerate}
\end{lemma}

\begin{proof}
(a) 
We only prove $\overline{\rmt}_\eta M \in \overline{\TT}_\theta$ 
since others can be shown similarly.
The set 
\begin{align*}
\{\theta\in K_0(\AA)_\R^* \mid \text{$\theta(X)\ge0$ 
for all factor objects $X$ of $\overline{\rmt}_\eta M$}\}
\end{align*}
is closed and contains $[\eta]_M$, 
and hence it contains $\overline{[\eta]_M}$. 
Thus the assertion follows.

(b) Since $\overline{\rmt}_\eta M \in \overline{\TT}_\theta$, 
we have $\overline{\rmt}_\eta M\subset\overline{\rmt}_\theta M$. 
Since $\overline{\rmf}_\eta M \in \overline{\FF}_\theta$,
$\overline{\rmf}_\eta M$ is 
a factor object of $\overline{\rmf}_\theta M$, 
and hence $\rmt_\theta M \subset \rmt_\eta M$.
Thus we have the inclusions in the statement.

It remains to show that the subfactors are in $\WW_\theta$.

We have $\overline{\rmt}_\eta M/\rmt_\eta M=\rmw_\eta M\in\WW_\theta$ 
by (a).

Since $\overline{\rmt}_\theta M/\overline{\rmt}_\eta M
\subset M/\overline{\rmt}_\eta M=\rmf_\eta M\in\overline{\FF}_\theta$ 
again by (a), 
we have $\overline{\rmt}_\theta M/\overline{\rmt}_\eta M
\in\overline{\FF}_\theta$. 
On the other hand, $\overline{\rmt}_\theta M/\overline{\rmt}_\eta M$
is in $\overline{\TT}_\theta$, because it is a factor object
of $\overline{\rmt}_\theta M \in \overline{\TT}_\theta$.
Thus $\overline{\rmt}_\theta M/\overline{\rmt}_\eta M
\in\overline{\TT}_\theta\cap\overline{\FF}_\theta=\WW_\theta$.

Since $\rmt_\eta M \in \overline{\TT}_\theta$ by (a), 
we have $\rmt_\eta M/\rmt_\theta M \in \overline{\TT}_\theta$. 
On the other hand, $\rmt_\eta M/\rmt_\theta M\subset M/\rmt_\theta M
=\overline{\rmf}_\theta M \in \overline{\FF}_\theta$, 
we have $\rmt_\eta M/\rmt_\theta M 
\in \overline{\FF}_\theta\cap\overline{\TT}_\theta=\WW_\theta$.
\end{proof}

We describe each $M$-TF equivalence class $E$ 
and its closure $\sigma=\overline{E}$ as follows.
For any subset $\XX \subset \AA$,
we have defined a subspace $H_\XX \subset K_0(\AA)_\R^*$ 
before Lemma \ref{Theta_M face} by
\begin{align*}
H_\XX:=\{ \theta \in K_0(\AA)_\R^* \mid 
\text{for any $X \in \XX$, $\theta(X)=0$} \}
=\{ \theta \in K_0(\AA)_\R^* \mid \theta(\R \XX)=0 \}.
\end{align*}

\begin{proposition}\label{E TWF}
Let $M \in \AA$, $E \in \TF(M)$ and $\sigma:=\overline{E} \in \Sigma(M)$.
Set $\XX:=\supp_E(\rmw_E M)$.
\begin{enumerate}[\rm (a)]
\item
We have
\begin{align*}
E=\{ \theta \in K_0(\AA)_\R^* \mid
\rmt_E M \in \TT_\theta, \ \XX \subset \simple \WW_\theta, \ 
\rmf_E M \in \FF_\theta \}.
\end{align*}
In particular, $E$ is an nonempty open convex subset of 
the subspace $H_\XX$.
\item
We have
\begin{align*}
\sigma&=\{ \theta \in K_0(\AA)_\R^* \mid
\rmt_E M \in \overline{\TT}_\theta, \ \XX \subset \WW_\theta, \ 
\rmf_E M \in \overline{\FF}_\theta \}.
\end{align*}
In particular, $\sigma$ is a rational polyhedral cone in $K_0(\AA)_\R^*$
such that $\R\sigma=H_\XX$.
\item
We have $E=\sigma^\circ$.
\end{enumerate}
\end{proposition}

\begin{proof}
(a)
The ``$\subset$'' part is clear. 
In fact, for each $\theta\in E$, 
we have $\rmt_E M=\rmt_\theta M\in\TT_\theta$, 
$\supp_E(\rmw_E M)=\supp_\theta(\rmw_\theta M) \subset \simple \WW_\theta$ 
and $\rmf_E M=\rmf_\theta M\in\FF_\theta$.

To show the ``$\supset$'' part, let $\theta$ in the right-hand side.
Then $\rmt_E M \in \TT_\theta$ implies 
$\rmt_E M \subset \rmt_\theta M$, and
$\rmf_E M \in \FF_\theta$ implies 
$\overline{\rmt}_\theta M \subset \overline{\rmt}_E M$,
so we have $\rmt_E M \subset \rmt_\theta M \subset 
\overline{\rmt}_\theta M \subset \overline{\rmt}_E M$.
Thus $\rmt_\theta M/\rmt_E M$ is 
a subobject of $\overline{\rmt}_E M/\rmt_E M=\rmw_E M$.
Since $\supp_E(\rmw_E M) \subset \simple \WW_\theta$,
we get $\rmw_E M \in \WW_\theta \subset \overline{\FF}_\theta$.
Thus we have $\rmt_\theta M/\rmt_E M \in \overline{\FF}_\theta$.
Since $\rmt_\theta M/\rmt_E M \in \TT_\theta$ holds, 
we have $\rmt_\theta M/\rmt_E M=
\TT_\theta \cap \overline{\FF}_\theta=\{0\}$;
hence $\rmt_E M=\rmt_\theta M$.
Dually, we have $\rmf_E M=\rmf_\theta M$ 
and hence $\rmw_E M=\rmw_\theta M$.
Then $\supp_E(\rmw_E M) \subset \simple \WW_\theta$ implies 
$\supp_E(\rmw_E M)=\supp_\theta(\rmw_\theta M)$.
Thus $\theta \in E$ as desired.

The equality in the statement is proved.
In particular, $E$ is the subset of $H_\XX$
given by finitely many strict linear inequalities.
Thus the last statement holds.

(b)
By Lemma \ref{t_eta M/t_theta M}(a), 
$\sigma=\overline{E}$ is contained in the right-hand side.

To show the converse, take some $\eta \in E$.
Then if $\theta$ is in the right-hand side,
for any $\epsilon \in \R_{>0}$, 
we can check $\theta':=\theta+\epsilon\eta$ satisfies 
$\rmt_E M \in \overline{\TT}_\theta \cap \TT_\eta \subset \TT_{\theta'}$,
$\supp_E(\rmw_E M) \subset \WW_\theta \cap (\simple \WW_\eta) \subset
\simple \WW_{\theta'}$ and
$\rmf_E M \in \overline{\FF}_\theta \cap \FF_\eta \subset \FF_{\theta'}$.
Thus (a) implies $\theta' \in E$ for any $\epsilon \in \R_{>0}$.
Then we have $\theta \in \overline{E}=\sigma$.

Now we have proved the equality in the statement.
The right-hand side is a rational polyhedral cone, and so is $\sigma$.
By (a), we have $\R\sigma=H_\XX$.

(c)
We show the ``$\subset$'' part first.
By (a)(b), $E$ is an open set of 
$\R \sigma=H_\XX$ with $E \subset \sigma \subset \R \sigma$.
Thus $E \subset \sigma^\circ$.

For the ``$\supset$'' part, let $\theta \in \sigma^\circ$.
Take some $\eta \in E$.
Then since $\theta \in \sigma^\circ$ and $\eta \in E \subset \R \sigma$,
there exists a sufficiently small $\epsilon \in \R_{>0}$
such that $\theta':=\theta-\epsilon \eta \in \sigma$.
We get $\theta=\theta'+\epsilon \eta \in \sigma+E$.
On the other hand, (a)(b) give $\sigma+E \subset E$.
Thus $\theta \in E$ as desired.
\end{proof}

Now we are ready to prove Theorem \ref{TF(M) Sigma(M)}.
We define a partial order $\le$ on $K_0(\AA)_\R$
so that $v \le v'$ holds if and only if 
$v'-v \in \sum_{i=1}^n \R_{\ge 0} [S(i)]$,
and $v<v'$ means $v \le v'$ and $v \ne v'$.

\begin{proof}[Proof of Theorem \ref{TF(M) Sigma(M)}]
(a) Each $\sigma\in\Sigma(M)$ is a rational polyhedral cone 
by Proposition \ref{E TWF}(b).

We prove that $\TF(M)$ and $\Sigma(M)$ are finite sets.
Consider all the linear equalities/inequalities on $\theta \in K_0(\AA)_\R^*$
of the forms $\langle \theta,v \rangle=0$, 
$\langle \theta,v \rangle>0$, 
$\langle \theta,v \rangle<0$
for some $v \in K_0(\AA)$ with $0 \le v \le [M]$.
These are a finite collection of linear equalities/inequalities 
on $\theta \in K_0(\AA)_\R^*$.
By Proposition \ref{E TWF}(a), each $E \in \TF(M)$ is determined by
some of such linear equalities/inequalities on $\theta \in K_0(\AA)_\R^*$.
Thus $\TF(M)$ is a finite set, and so is $\Sigma(M)$.

(b) The map $\Phi \colon \TF(M) \to \Sigma(M)$ with
$E \mapsto \overline{E}$ is clearly well-defined and surjective.
Proposition \ref{E TWF}(c) implies that
the map $\Psi \colon \Sigma(M) \to \TF(M)$ 
with $\sigma \mapsto \sigma^\circ$ is well-defined
and $\Psi \Phi$ is identity.
Thus $\Phi$ and $\Psi$ are bijective, and $\Phi=\Psi^{-1}$.
\end{proof}

Then the $M$-TF equivalence for each $M$ can be characterized as follows.

\begin{lemma}\label{closure both}
Let $M \in \AA$ and $\theta,\eta \in K_0(\AA)_\R^*$.
Then the following conditions are equivalent.
\begin{enumerate}[\rm (a)]
\item 
$\theta$ and $\eta$ are $M$-TF equivalent.
\item
Both $\theta \in \overline{[\eta]_M}$ and 
$\eta \in \overline{[\theta]_M}$ hold.
\end{enumerate}
\end{lemma}

\begin{proof}
(a)$\Rightarrow$(b) is obvious.

We show (b)$\Rightarrow$(a).
Since $\theta \in \overline{[\eta]_M}$ and 
$\eta \in \overline{[\theta]_M}$,
Lemma \ref{t_eta M/t_theta M}(b) implies $\rmt_\theta M=\rmt_\eta M$.
Similarly $\rmf_\theta M=\rmf_\eta M$ holds;
hence $\rmw_\theta M=\rmw_\eta M$.

By Proposition \ref{E TWF}(a), it remains to show
$\supp_\theta(\rmw_\theta M) \subset \simple \WW_\eta$.
If $X \in \supp_\theta(\rmw_\theta M)$,
then $\eta \in \overline{[\theta]_M}$ implies $X \in \WW_\eta$
by Lemma \ref{t_eta M/t_theta M}(a).
Since $X \ne 0$, take a simple subobject $Y$ of $X$ in $\WW_\eta$.
Then $\theta \in \overline{[\eta]_M}$ implies $Y \in \WW_\theta$
again by Lemma \ref{t_eta M/t_theta M}(a),
so $Y$ is a subobject of $X$ in $\WW_\theta$.
Since $X \in \supp_\theta(\rmw_\theta M)$, 
we have $X=Y \in \simple \WW_\eta$. 
Therefore $\supp_\theta(\rmw_\theta M) \subset \simple \WW_\eta$ 
as desired.
\end{proof}

Next we aim to rewrite the $M$-TF equivalence
in terms of the following set of some certain subobjects of $M$.

\begin{definition-proposition}
Let $M \in \AA$, and $\theta \in K_0(\AA)_\R^*$.
We define $\rmt(\theta,M)$ as the set of subobjects $L$ of $M$
satisfying the following equivalent conditions.
\begin{enumerate}[\rm (a)]
\item 
$\rmt_\theta M \subset L$ and 
$L/\rmt_\theta M \in \WW_\theta$.
\item 
$\rmt_\theta M \subset L$ and 
$L/\rmt_\theta M$ is a subobject of $\rmw_\theta M$ in $\WW_\theta$.
\item 
$L \subset \overline{\rmt}_\theta M$ and 
$\overline{\rmt}_\theta M/L \in \WW_\theta$.
\item 
$L \subset \overline{\rmt}_\theta M$ and 
$\overline{\rmt}_\theta M/L$ is a factor object of 
$\rmw_\theta M$ in $\WW_\theta$.
\end{enumerate}
In particular, we have $\rmt_\theta M,\overline{\rmt}_\theta M \in \rmt(\theta,M)$ and $\rmt(\theta,M) \subset \overline{\TT}_\theta$.
\end{definition-proposition}

\begin{proof}
(b)$\Rightarrow$(a) and (d)$\Rightarrow$(c) are clear.

(a)$\Rightarrow$((b) and (d))
Since $\rmt_\theta M \in \TT_\theta$
and $L/\rmt_\theta M \in \WW_\theta$, 
we get $L \in \overline{\TT}_\theta$.
Thus we have $\rmt_\theta M \subset L \subset \overline{\rmt}_\theta M$.
This and $L/\rmt_\theta M \in \WW_\theta$ imply that
$L/\rmt_\theta M$ is a subobject of $\rmw_\theta M$ in $\WW_\theta$.
Thus $\overline{\rmt}_\theta M/L$ is a factor object of 
$\rmw_\theta M$ in $\WW_\theta$.
Therefore we get (b) and (d).

Similarly, (c)$\Rightarrow$((b) and (d)) is true.
\end{proof}

We remark that $\rmt(\theta,M)$ coincides with $\LL(\theta,M)$ 
defined in \cite[Section 4]{Fei2},
which follows from Lemma \ref{N(M)_theta} later.

By using $\rmt(\theta,M)$, 
we can characterize elements in $\Sigma(M)$ and $\TF(M)$ as follows.

\begin{lemma}\label{L(theta,M) incl}
Let $M \in \AA$, and $\theta,\eta \in K_0(\AA)_\R^*$.
\begin{enumerate}[\rm (a)]
\item
The following conditions are equivalent.
\begin{enumerate}[\rm (i)]
\item
$\theta \in \overline{[\eta]_M}$.
\item
$\rmt(\eta,M) \subset \rmt(\theta,M)$.
\end{enumerate}
\item
The following conditions are equivalent.
\begin{enumerate}[\rm (i)]
\item
$\theta$ and $\eta$ are $M$-TF equivalent.
\item
$\rmt(\eta,M)=\rmt(\theta,M)$.
\end{enumerate}
\end{enumerate}
\end{lemma}

\begin{proof}
(a)
(i)$\Rightarrow$(ii)
Let $L \in \rmt(\eta,M)$.
We have $\rmt_\eta M \subset L$, 
and $L/\rmt_\eta M$ is a subobject of $\rmw_\eta M$ in $\WW_\eta$.
Then by Lemma \ref{t_eta M/t_theta M}(b)
and $\theta \in \overline{[\eta]_M}$, 
we have $\rmt_\theta M \subset \rmt_\eta M \subset L$
and $\rmt_\eta M/\rmt_\theta M \in \WW_\theta$.
Moreover, since
$L/\rmt_\eta M$ is a subobject of $\rmw_\eta M$ in $\WW_\eta$,
Lemma \ref{t_eta M/t_theta M}(a) and
$\theta \in \overline{[\eta]_M}$ imply
$L/\rmt_\eta M \in \WW_\theta$.
Therefore $\rmt_\theta M \subset L$ and 
$L/\rmt_\theta M \in \WW_\theta$ hold.
Thus we get $L \in \rmt(\theta,M)$.

(ii)$\Rightarrow$(i)
By Proposition \ref{E TWF}(b),
it suffices to show
(1) $\rmt_\eta M \in \overline{\TT}_\theta$,
(2) $\supp_\eta(\rmw_\eta M) \in \WW_\theta$ and
(3) $\rmf_\eta M \in \overline{\FF}_\theta$.

(1)
Since $\rmt_\eta M \in \rmt(\eta,M)$,
(ii) implies $\rmt_\eta M \in \rmt(\theta,M) \in \overline{\TT}_\theta$.

(2)
Considering a composition series of $\rmw_\eta M$ in $\WW_\eta$,
the desired condition $\supp_E(\rmw_E M) \subset \WW_\theta$ holds
if any subobject $L' \subset \rmw_\eta M$ in $\WW_\eta$ 
belongs to $\WW_\theta$.
Let $L'$ be a subobject of $\rmw_\eta M$ in $\WW_\eta$.
Since $L'$ is a subobject of $\overline{\rmf}_\eta M=M/\rmt_\eta M$,
take a subobject $L \subset M$ such that 
$\rmt_\eta M \subset L$ and $L/\rmt_\eta M \simeq L' \in \WW_\eta$.
Then we have $L \in \rmt(\eta,M)$, and then (ii) implies $L \in \rmt(\theta,M)$.
Thus we get $L/\rmt_\theta M \in \WW_\theta$.
Since $\rmt_\eta M \in \rmt(\eta,M) \subset \rmt(\theta,M)$ by (ii), 
we have $\rmt_\theta M \subset \rmt_\eta M$ and 
$\rmt_\eta M/\rmt_\theta M \in \WW_\theta$.
Thus $L/\rmt_\eta M=(L/\rmt_\theta M)/(\rmt_\eta M/\rmt_\theta M) 
\in \WW_\theta$; hence $L' \in \WW_\theta$.

(3) 
We have $\overline{\rmt}_\eta M \in \rmt(\eta,M)$.
Then by (ii), we have $\overline{\rmt}_\eta M \in \rmt(\theta,M)$.
Thus $\overline{\rmt}_\eta M \subset \overline{\rmt}_\theta M$ and
$\overline{\rmt}_\theta M/\overline{\rmt}_\eta M \in \WW_\theta$ hold.
This and $M/\overline{\rmt}_\theta M=\rmf_\theta M \in \FF_\theta$ give
$\rmf_\eta M=M/\overline{\rmt}_\eta M \in \overline{\FF}_\theta$.

(b) follows from (a) and Lemma \ref{closure both}.
\end{proof}

Next we explain that the set $\rmt(\theta,M)$ 
for each $\theta \in K_0(\AA)_\R^*$
is strongly related to the Newton polytope $\rmN(M)$.
Recall that we have defined a partial order $\le$ on $K_0(\AA)_\R$
so that $v \le v'$ holds if and only if 
$v'-v \in \sum_{i=1}^n \R_{\ge 0} [S(i)]$.
The notation $v<v'$ means $v \le v'$ and $v \ne v'$.
The following lemma on
\begin{align*}
\rmN(M)_\theta=\{v \in \rmN(M) \mid \theta(v)=\max \theta(\rmN(M))\}
\end{align*}
is crucial; 
see also \cite[Subsection 1.4]{BKT} and 
\cite[Theorem 4.4, Proposition 8.6]{Fei2}.

\begin{lemma}\label{N(M)_theta}
\cite[Lemmas 5.19, 5.20]{AHIKM}
Let $M \in \AA$ and $\theta \in K_0(\AA)_\R^*$.
\begin{enumerate}[\rm (a)]
\item
Let $L$ be a subobject of $M$.
Then $L \in \rmt(\theta,M)$ is equivalent to $[L] \in \rmN(M)_\theta$.
In particular, for each $L\in\rmt(\theta,M)$, we have $\theta(L)=\max\theta(\rmN(M))$.
\item
In $K_0(\AA)_\R$, we have 
\begin{align*}
\rmN(M)_\theta=[\rmt_\theta M]+\conv\{ [X] \mid 
\text{$X$ is a subobject of $\rmw_\theta M$ in $\WW_\theta$}\}
=\conv\{ [L] \mid L \in \rmt(\theta,M)\}.
\end{align*}
\item
Under the partial order on $K_0(\AA)_\R$ above, we have
\begin{align*}
\min \rmN(M)_\theta=[\rmt_\theta M], \quad 
\max \rmN(M)_\theta=[\overline{\rmt}_\theta M].
\end{align*}
In particular, each edge $[u,v]$ of $\rmN(M)$ satisfies $u<v$ and $v<u$.
\end{enumerate}
\end{lemma}

This and Lemma \ref{L(theta,M) incl}(a) imply the following.

\begin{lemma}\label{M-TF N(M)_theta incl}
Let $\theta,\eta \in K_0(\AA)_\R^*$.
Then the following conditions are equivalent.
\begin{enumerate}[\rm (a)]
\item
$\theta \in \overline{[\eta]_M}$.
\item
$\rmt(\eta,M) \subset \rmt(\theta,M)$.
\item
$\rmN(M)_\eta \subset \rmN(M)_\theta$.
\end{enumerate}
In this case, we have 
\begin{align*}
&\theta(\rmt_\theta M)=\theta(\overline{\rmt}_\theta M)=
\theta(\rmt_\eta M)=\theta(\overline{\rmt}_\eta M)
=\max \theta(\rmN(M)),\\
&\theta(\rmf_\theta M)=\theta(\overline{\rmf}_\theta M)=
\theta(\rmf_\eta M)=\theta(\overline{\rmf}_\eta M)
=\theta(M)-\max \theta(\rmN(M)).
\end{align*}
\end{lemma}

\begin{proof}
(a)$\Leftrightarrow$(b) are just Lemma \ref{L(theta,M) incl}(a).

(b)$\Rightarrow$(c) follows from Lemma \ref{N(M)_theta}(b),
and (c)$\Rightarrow$(b) follows from Lemma \ref{N(M)_theta}(a).

The last part follows from Lemma \ref{N(M)_theta}(a).
\end{proof}

Thus we have the following characterization of the $M$-TF equivalence.

\begin{lemma}\label{M-TF N(M)_theta}
Fix $M \in \AA$. 
\begin{enumerate}[\rm (a)]
\item
For any $\theta,\eta \in K_0(\AA)_\R^*$, 
the following conditions are equivalent.
\begin{enumerate}[\rm (i)]
\item
$\theta$ and $\eta$ are $M$-TF equivalent.
\item
$\rmt(\theta,M)=\rmt(\eta,M)$.
\item
$\rmN(M)_\theta=\rmN(M)_\eta$.
\end{enumerate}
\item
Let $E \in \TF(M)$. 
Then $\rmN(M)_E \in \Face \rmN(M)$ is well-defined
by setting $\rmN(M)_E:=\rmN(M)_\eta$ for any $\eta \in E$.
Moreover we have $E=(\sigma_{\rmN(M)_E})^\circ$ and 
$\overline{E}=\sigma_{\rmN(M)_E}$.
\end{enumerate}
\end{lemma}

\begin{proof}
(a) follows from Lemmas \ref{closure both} and \ref{M-TF N(M)_theta incl}.

(b)
By (a), $\rmN(M)_E$ is well-defined.

Then Definition-Proposition \ref{Sigma(P)}(d) implies that,
for any $\theta \in E$, we have $\theta \in (\sigma_{\rmN(M)_E})^\circ$.
Thus we get $E \subset (\sigma_{\rmN(M)_E})^\circ$.

Conversely, let $\theta \in (\sigma_{\rmN(M)_E})^\circ$.
By the uniqueness in Definition-Proposition \ref{Sigma(P)}(d),
we have $\sigma_{\rmN(M)_\theta}=\sigma_{\rmN(M)_E}$.
Thus $\rmN(M)_\theta=\rmN(M)_E$ holds 
by Definition-Proposition \ref{Sigma(P)}(c).
Then (a) implies $\theta \in E$.
Therefore $(\sigma_{\rmN(M)_E})^\circ \subset E$ also holds.

Thus $E=(\sigma_{\rmN(M)_E})^\circ$; 
hence $\overline{E}=\sigma_{\rmN(M)_E}$.
\end{proof}

Now we are able to prove the main results,
Theorem \ref{Sigma(M)=Sigma(N(M))}
and Corollaries \ref{M-TF decompose} and \ref{Theta_M Sigma(M)}.

\begin{proof}[Proof of Theorem \ref{Sigma(M)=Sigma(N(M))}]
(a) 
Definition-Proposition \ref{Sigma(P)}(b) gives 
a surjection $K_0(\AA)_\R^* \to \Sigma(\rmN(M))$
defined by $\theta \mapsto \sigma_{\rmN(M)_\theta}$.
This induces a bijection $\TF(M) \to \Sigma(\rmN(M))$
with $E \mapsto \sigma_{\rmN(M)_E}$ by Lemma \ref{M-TF N(M)_theta}(b).
On the other hand, we have
a bijection $\TF(M) \to \Sigma(M)$ in Theorem \ref{TF(M) Sigma(M)}
given by $E \mapsto \overline{E}$.
These bijections coincide again by Lemma \ref{M-TF N(M)_theta}(b),
so we have $\Sigma(M)=\Sigma(\rmN(M))$.

Now $\Sigma(M)$ is a rational generalized fan 
which is finite and complete 
by Definition-Proposition \ref{Sigma(P)}(b).

(b) follows from (a) and Definition-Proposition \ref{Sigma(P)}(c).
\end{proof}

\begin{proof}[Proof of Corollary \ref{M-TF decompose}]
(a) and (b) follow from 
that $\Sigma(M)$ is a generalized fan 
(by Theorem \ref{Sigma(M)=Sigma(N(M))})
and from that $\sigma^\circ \in \TF(M)$ holds 
for each $\sigma \in \Sigma(M)$ (by Theorem \ref{TF(M) Sigma(M)}).

(c)
Consider the $\R$-vector subspace 
$V:=H_{\supp_{E'}(\rmw_{E'} M)}$.
The desired equality is $\sigma'=\sigma \cap V$.
Since $\sigma' \in \Face \sigma$, 
it suffices to show $\R \sigma'=V$.
By Theorem \ref{TF(M) Sigma(M)}, $\sigma'=\overline{E'}$,
so Proposition \ref{E TWF}(b) gives $\R \sigma'=V$.
\end{proof}

\begin{proof}[Proof of Corollary \ref{Theta_M Sigma(M)}]
(a)
For the vertex $\{0\} \in \Face_0 \rmN(M)$,
by Theorem \ref{Sigma(M)=Sigma(N(M))},
the corresponding element in $\Sigma_n(M)$ is
$\{ \theta \in K_0(\AA)_\R^* \mid \max \theta(\rmN(M))=0\}$,
which is clearly $\sigma_-$.
Similarly, the vertex $\{[M]\}$ corresponds to $\sigma_+$.

(b)
Since $\WW_\theta=\overline{\FF}_\theta \cap \overline{\TT}_\theta$
for any $\theta \in K_0(\AA)_\R^*$,
we have $\Theta_M=\sigma_- \cap \sigma_+$.
Moreover (a) and Proposition \ref{Face_d N(M) bij} imply 
$\sigma_- \cap \sigma_+=\sigma_F \in \Sigma(M)$.
Thus Theorem \ref{TF(M) Sigma(M)}(b) gives $\Theta_M^\circ \in \TF(M)$.

(c) follows from (b) and Theorem \ref{Sigma(M)=Sigma(N(M))}(b).

(d) and (e) follow from (a) and Corollary \ref{M-TF decompose}(a)(b).
\end{proof}

We end this section with the following result on the smallest element 
of $\Sigma(M)$.

\begin{proposition}\label{smallest}
Let $M \in \AA$.
Set 
\begin{align*}
I:=\{i \in \{1,\ldots,n\} \mid 
\text{$S(i)$ is not a composition factor of $M$ in $\AA$}\}.
\end{align*}
Then the vector subspace $\sum_{i \in I}\R [S(i)]^* \subset K_0(\AA)_\R^*$
is the smallest element of $\Sigma(M)$,
and it is the $M$-TF equivalence class $[0]_M$ of $0 \in K_0(\AA)_\R^*$.
\end{proposition}

\begin{proof}
Clearly, $M \in \WW_0$.
Since $\WW_0=\AA$, we have $\supp_0 (\rmw_0 M)=\{S(i) \mid i \notin I\}$.
Then Proposition \ref{E TWF}(a) gives $[0]_M=\sum_{i \in I}\R [S(i)]^*$.
This is already closed, so $\sum_{i \in I}\R [S(i)]^* \in \Sigma(M)$.

Now let $\sigma \in \Sigma(M)$.
Then $0 \in \sigma$ holds,
so take the unique face $\tau \in \Face \sigma$ such that
$0 \in \tau^\circ$.
By Corollary \ref{M-TF decompose}(a), we get $\tau^\circ \in \TF(M)$.
Thus $\tau^\circ=[0]_M$ holds.
We have $\sum_{i \in I}\R [S(i)]^*=[0]_M=\tau^\circ \subset \sigma$
for any $\sigma \in \Sigma(M)$.
Thus $\sum_{i \in I}\R [S(i)]^*$ is the smallest element of $\Sigma(M)$,
and it is the $M$-TF equivalence class $[0]_M$.
\end{proof}

\section{Maximal cones of $\Sigma(M)$}\label{Sec max}

For each $M\in\AA$, we constructed bijections $\Sigma(M) \leftrightarrow \TF(M)$ 
and a generalized fan $\Sigma(M)$ 
which is finite and complete in $K_0(\AA)_\R^* \simeq \R^n$; 
see Theorem \ref{Sigma(M)=Sigma(N(M))}.
Thus, in this section, we use the notations like 
\begin{align*}
\rmt_\sigma M:=\rmt_{\sigma^\circ} M\ 
\text{for each $\sigma \in \Sigma(M)$.}
\end{align*}
We set $\TF_i(M)$ and $\Sigma_i(M)$ as the subsets of $\TF(M)$ and $\Sigma(M)$
consisting of $i$-dimensional elements for each $i\in \{0,\ldots,n\}$.
In this section, we mainly consider cones in $\Sigma_n(M)$, 
which are precisely the maximal cones in $\Sigma(M)$.
We begin with the following characterization of $\TF_n(M)$ and $\Sigma_n(M)$.

\begin{lemma}\label{TF Sigma max}
Let $M \in \AA$, $E \in \TF(M)$ and $\sigma:=\overline{E} \in \Sigma(M)$.
Then the following conditions are equivalent.
\begin{enumerate}[\rm (a)]
\item
$E \in \TF_n(M)$; or equivalently, $E$ is open in $K_0(\AA)_\R^*$.
\item
$\sigma \in \Sigma_n(M)$; or equivalently, $\sigma$ is maximal in $\Sigma(M)$.
\item
$\rmw_E M=0$; or equivalently, $\rmt_E M=\overline{\rmt}_E M$.
\end{enumerate}
In particular, we have a bijection $\TF_n(M) \to \Sigma_n(M)$ given by 
$E \mapsto \overline{E}$.
\end{lemma}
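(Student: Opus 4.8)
The plan is to establish the cycle of implications (a)$\Rightarrow$(b)$\Rightarrow$(c)$\Rightarrow$(d)$\Rightarrow$(e)$\Rightarrow$(a), drawing throughout on Lemma \ref{E TWF} and the normal-fan description from Theorem \ref{Sigma(M)=Sigma(N(M))}. First I would prove (a)$\Leftrightarrow$(b): by Lemma \ref{E TWF}(b) we have $\dim_\R \sigma = n - \dim_\R(\R\supp_E(\rmw_E M))$, so $E$ is full-dimensional precisely when $\supp_E(\rmw_E M) = \emptyset$, and since $\rmw_E M \in \WW_\theta$ for $\theta \in E$, the emptiness of its support in $\WW_\theta$ is equivalent to $\rmw_E M = 0$. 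Next, (b)$\Rightarrow$(c): when $\rmw_E M = 0$, Lemma \ref{E TWF}(a) describes $E$ by the strict inequalities coming from $\rmt_E M \in \TT_\theta$ and $\rmf_E M \in \FF_\theta$ only (no equalities, since $\supp_E(\rmw_E M) = \emptyset$ so the ambient hyperplane is all of $K_0(\AA)_\R^*$), hence $E$ is open in $K_0(\AA)_\R^*$; and (c)$\Rightarrow$(a) is immediate since a nonempty open subset of $K_0(\AA)_\R^*$ spans it, so $\dim_\R E = n$.

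For the equivalences involving $\sigma$: (c)$\Rightarrow$(d) follows because if $E$ is open then $\R\sigma = \R E = K_0(\AA)_\R^*$, so $\dim_\R\sigma = n$; and (d)$\Rightarrow$(a) holds because $E = \sigma^\circ$ by Lemma \ref{E TWF}(c) (or Theorem \ref{Sigma(M) fan}(a)) has the same dimension as $\sigma$. Finally (d)$\Leftrightarrow$(e): in any finite complete fan in $K_0(\AA)_\R^*$, the maximal cones are exactly the $n$-dimensional ones — a cone of dimension $< n$ is a proper face of some cone (since the fan covers $\R^n$, which is $n$-dimensional), and conversely an $n$-dimensional cone cannot be a proper face of another cone because faces of a cone of dimension $\le n$ either equal it or have strictly smaller dimension. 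This last point is where I would lean on Theorem \ref{Sigma(M) fan}(b) (equivalently Theorem \ref{Sigma(M)=Sigma(N(M))}(a)) to know $\Sigma(M)$ genuinely is a finite complete fan, together with the bijection $\Face \rmN(M) \leftrightarrow \Sigma(M)$ reversing order with $\dim_\R\sigma_F = n - \dim_\R F$, so that maximal $\sigma$ corresponds to minimal $F$, i.e. a vertex, i.e. $\dim_\R F = 0$, i.e. $\dim_\R\sigma_F = n$.

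The concluding sentence — the bijection $\TF(M)_n \to \Sigma(M)_n$ via $E \mapsto \overline{E}$ — then follows at once: it is the restriction of the bijection $\TF(M) \leftrightarrow \Sigma(M)$ of Theorem \ref{Sigma(M) fan}(a) to the full-dimensional members, and the equivalence (a)$\Leftrightarrow$(d) just proved shows this restriction is well-defined and surjective onto $\Sigma(M)_n$.

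I do not anticipate a serious obstacle here; the content is essentially bookkeeping of dimensions. The one spot demanding a little care is the purely combinatorial fact that in a complete fan the maximal cones coincide with the full-dimensional ones — this needs completeness (otherwise a lower-dimensional cone could be maximal) and I would phrase it via the face-lattice correspondence with $\rmN(M)$ rather than argue directly, since that correspondence is already in hand from Theorem \ref{Sigma(M)=Sigma(N(M))}(b).
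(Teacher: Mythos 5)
Your proof is correct and follows essentially the same route as the paper: (a), (b), (c) are tied together via Lemma \ref{E TWF}(a) (openness inside the kernel hyperplane of $\supp_E(\rmw_E M)$), (a)$\Leftrightarrow$(d) comes from $E=\sigma^\circ$ in Lemma \ref{E TWF}(c), and (d)$\Leftrightarrow$(e) from completeness of the fan, with the final bijection inherited from Theorem \ref{Sigma(M) fan}(a). You simply spell out the dimension bookkeeping and the ``maximal $=$ full-dimensional in a complete fan'' argument (via the order-reversing bijection with $\Face \rmN(M)$) more explicitly than the paper, which treats these as immediate.
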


\begin{proof}
By Proposition \ref{E TWF}(a), $E\in\TF_n(M)$ if and only if $E$ is open in $K_0(\AA)_\R^*$. These conditions are equivalent to (c) again by Proposition \ref{E TWF}(a).

Since $\sigma$ is a polyhedral cone by Proposition \ref{E TWF}(b),
(a) and (b) are equivalent by Proposition \ref{E TWF}(c).

Then the last statement follows from Theorem \ref{TF(M) Sigma(M)}(a). 
\end{proof}

For any $E \in \TF_n(M)$ with $\sigma:=\overline{E}$,
Proposition \ref{E TWF}(a) and Lemma \ref{TF Sigma max} give
\begin{align*}
\sigma^\circ=E&=\{\theta \in K_0(\AA)_\R^* \mid
\rmt_\sigma M \in \TT_\theta, \ \rmf_\sigma M \in \FF_\theta\},\\
\partial \sigma=\sigma \setminus E&=
\{\theta \in \sigma \mid
\text{$\rmt_\sigma M \notin \TT_\theta$ or 
$\rmf_\sigma M \notin \FF_\theta$}\}.
\end{align*}

Motivated by this, we define the following subsets of
the boundary $\partial \sigma$.

\begin{definition}\label{def partial^+}
Let $M \in \AA$. For each $\sigma \in \Sigma_n(M)$, we set
\begin{align*}
\partial^+ \sigma
:=\{\theta \in \sigma \mid \rmt_\sigma M \notin \TT_\theta\}, \quad
\partial^- \sigma
:=\{\theta \in \sigma \mid \rmf_\sigma M \notin \FF_\theta\}.
\end{align*}    
\end{definition}

These satisfy the following properties.

\begin{proposition}\label{union of faces}
Let $M \in \AA$ and $\sigma \in \Sigma_n(M)$.
\begin{enumerate}[\rm (a)]
\item 
We obtain $\partial \sigma=\partial^+ \sigma \cup \partial^- \sigma$.
\item 
We have
\begin{align*}
\partial^+\sigma&=\{\theta \in \sigma \mid
\text{there exists a nonzero factor object $X$ 
of $\rmt_\sigma M$ with $\theta(X)=0$}\},\\
\partial^-\sigma&=\{\theta \in \sigma \mid
\text{there exists a nonzero subobject $X$ 
of $\rmf_\sigma M$ with $\theta(X)=0$}\}.
\end{align*}
\item 
The sets $\partial^+ \sigma$ and $\partial^- \sigma$
are unions of faces of $\sigma$.
\end{enumerate}
\end{proposition}

\begin{proof}
(a) follows from the observation
above Definition \ref{def partial^+}.

(b)
We show the first equality, because the other is dual.
The right-hand side is contained in the left-hand side
by definition.

To show the converse, let $\theta \in \partial^+ \sigma$.
Since $\rmt_\sigma M \notin \TT_\theta$,
we take a nonzero factor object $X$ of $\rmt_\sigma M$
such that $\theta(X) \le 0$.
On the other hand, 
by $\theta \in \partial^+ \sigma \subset \sigma=
\overline{\sigma^\circ}$
and Lemma \ref{t_eta M/t_theta M}(a),
we have $\rmt_\sigma M \in \overline{\TT}_\theta$,
so in particular, $\theta(X) \ge 0$ holds.
Thus we have $\theta(X)=0$ as desired.

(c)
For each nonzero factor object $X$ of $\rmt_\sigma M$,
the set $\{\theta \in \sigma \mid \theta(X)=0\}$
is a face of $\sigma$,
because any $\theta \in \sigma$ satisfies $\theta(X) \ge 0$.
Thus $\partial^+ \sigma$ is a union of faces of $\sigma$ by (b),
and so is $\partial^- \sigma$.
\end{proof}

As usual, faces of of codimension one are called \emph{facets}.
For $\sigma \in \Sigma_n(M)$, 
we write $\Facet \sigma$ for the set of facets of $\sigma$,
and 
\begin{align*}
\Facet^\pm \sigma:=\{\tau \in \Facet \sigma
\mid \tau \subset \partial^\pm \sigma\}.
\end{align*}

The main result of this section below is 
on the purity of $\partial^\pm \sigma$.
On the partial order $\le$ on $K_0(\AA)_\R$,
see before Lemma \ref{N(M)_theta}.

\begin{theorem}\label{facet +-}
For $M \in \AA$, let $\sigma \in \Sigma_n(M)$ and $\{v\}\in\Face_0 \rmN(M)$ the corresponding vertex.
\begin{enumerate}[\rm (a)]
\item 
$\Facet \sigma=\Facet^+ \sigma \sqcup \Facet^- \sigma$.
\item
We have $\partial^+\sigma=\bigcup_{\tau\in\Facet^+\sigma}\tau$ and $\partial^-\sigma=\bigcup_{\tau\in\Facet^-\sigma}\tau$.
\item Let $\sigma'\in\Sigma_n(M)$ such that $\tau:=\sigma\cap\sigma'\in\Facet\sigma$, and $\{v'\}\in\Face_0 \rmN(M)$ the corresponding vertex to $\sigma'$.
Then precisely one of the following statements holds.
\begin{enumerate}[\rm(i)]
\item $v>v'$ and $\tau\in\Facet^+\sigma$.
\item $v<v'$ and $\tau\in\Facet^-\sigma$.
\end{enumerate}
\end{enumerate}
\end{theorem}

For our purpose, 
the following property of the vertices of $\rmN(M)$ is important.
By Lemma \ref{N(M)_theta}(c), 
each $F \in \rmN(M)$ has the smallest element, 
which is denoted by $\min F$.

\begin{lemma}\label{side +-}
Let $M \in \AA$. 
Assume that $F \in \Face \rmN(M)$ and $\{v\} \in \Face_0 \rmN(M)$
satisfy $v \in F$ and $v \ne \min F$.
Then there exists a vertex $u \in F$ adjacent to $v$ and $u<v$.
\end{lemma}

\begin{proof}
Take the subset $X \subset K_0(\AA)_\R$ 
consisting of $v$ and all vertices $u \in F$ adjacent to $v$.
By Lemma \ref{N(M)_theta}(c), 
each $u \in X \setminus \{v\}$ satisfies $v<u$ and $u<v$.
As a subset of $F$, the convex hull $\conv X$ is a neighborhood of $v$ in $F$,
so $v \ne \min F$ implies 
the existence of $u \in X \setminus \{v\}$ such that $u<v$.
Thus we get the assertion. 
\end{proof}

Thus we have the following result.
For any $\sigma \in \Sigma(M)$, 
we set $\rmN(M)_\sigma:=\rmN(M)_{\sigma^\circ}$,
which is well-defined by Lemma \ref{M-TF N(M)_theta}(b).

\begin{lemma}\label{facet side}
For $M \in \AA$,
let $\sigma,\sigma' \in \Sigma_n(M)$,
and $\{v\},\{v'\} \in \Face_0 \rmN(M)$ 
the correspoinding vertices.
Then the following conditions are equivalent.
\begin{enumerate}[\rm (a)]
\item
The intersection $\sigma \cap \sigma'$ is 
in $\Sigma_{n-1}(M)$.
\item
The line segment $[v,v']$ is an edge of $\rmN(M)$.
\end{enumerate}
In this case, set 
$\tau:=\sigma \cap \sigma' \in \Sigma_{n-1}(M)$. 
Then precisely one of the following statements holds.
\begin{enumerate}[\rm (i)]
\item
$\tau=\partial^+ \sigma \cap \partial^- \sigma'$,
$\rmt_{\sigma'} M=\rmt_\tau M \subsetneq 
\overline{\rmt}_\tau M=\rmt_\sigma M$ 
and $v>v'$.
\item
$\tau=\partial^- \sigma \cap \partial^+ \sigma'$,
$\rmt_\sigma M=\rmt_\tau M \subsetneq 
\overline{\rmt}_\tau M=\rmt_{\sigma'} M$ 
and $v<v'$.
\end{enumerate}
\end{lemma}

\begin{proof}
By Proposition \ref{Face_d N(M) bij},
(a) and (b) are equivalent,
and in this case, $\rmN(M)_\tau=[v,v']$ holds.
On the other hand, since $\tau$ is an edge of $\rmN(M)$,
we have $\rmN(M)_\tau=[[\rmt_\tau M],[\overline{\rmt}_\tau M]]$
with $[\rmt_\tau M]<[\overline{\rmt}_\tau M]$
by Lemma \ref{N(M)_theta}(c).
Thus, the pair $(v,v')$ is 
$([\rmt_\tau M],[\overline{\rmt}_\tau M])$ or 
$([\overline{\rmt}_\tau M],[\rmt_\tau M])$.

In the rest, we assume the former, and show (ii).
First, $v<v'$ is already shown above.
Moreover $\tau \subset \sigma$ implies 
$\rmt_\tau M \subset \rmt_\sigma M$ by Lemma \ref{t_eta M/t_theta M}(b).
This and $[\rmt_\sigma M]=v=[\rmt_\tau M]$ give 
$\rmt_\tau M=\rmt_\sigma M$.
Similarly, by $\tau \subset \sigma'$,
we get $\overline{\rmt}_\tau M=\overline{\rmt}_{\sigma'} M$,
which is $\rmt_{\sigma'} M$ by Lemma \ref{TF Sigma max}.
Consequently, we obtain $\rmt_\sigma M=\rmt_\tau M \subset 
\overline{\rmt}_\tau M=\rmt_{\sigma'} M$.
The inclusion must be proper, since $v<v'$.
Thus $\rmt_\tau M=\rmt_\sigma M \subsetneq \rmt_{\sigma'} M$ hold,
so we get $\tau^\circ \subset \partial^+ \sigma'$ and hence
$\tau \subset \partial^+ \sigma'$.
Similarly, we have $\tau \subset \partial^- \sigma$.
Thus $\tau \subset \partial^- \sigma \cap \partial^+ \sigma'$ follows.
The converse $\partial^- \sigma \cap \partial^+ \sigma' \subset \tau$
clearly holds, since $\tau=\sigma \cap \sigma'$.
Therefore $\tau=\partial^- \sigma \cap \partial^+ \sigma'$.
\end{proof}

Then Theorem \ref{facet +-} can be proved as follows.

\begin{proof}[Proof of Theorem \ref{facet +-}]
Set $E:=\sigma^\circ$.
By Lemma \ref{TF Sigma max}, $E \in \TF_n(M)$.

(a)
We first show $\Facet \sigma=\Facet^+ \sigma \cup \Facet^- \sigma$.
The ``$\supset$'' part is clear, so we check the ``$\subset$'' part.
Let $\tau \in \Facet \sigma$.
Then $\tau \subset \partial \sigma$ holds,
so Proposition \ref{union of faces}(a) implies 
that $\tau \subset \partial^+ \sigma \cup \partial^- \sigma$.
Since $\tau$ is a face of $\sigma$,
Proposition \ref{union of faces}(c) implies that
$\tau$ is contained in at least one of 
$\partial^+ \sigma$ or $\partial^- \sigma$.
Thus $\Facet \sigma \subset \Facet^+ \sigma \cup \Facet^- \sigma$ holds.

It remains to prove $\Facet^+ \sigma \cap \Facet^- \sigma = \emptyset$.
Let $\tau \in \Facet^+ \sigma \cap \Facet^- \sigma$.
Since $\Sigma(M)$ is finite and complete
and since $\tau$ is a facet of $\sigma$,
we take $\sigma' \in \Sigma_n(M)$ 
such that $\tau=\sigma \cap \sigma'$.
Then Lemma \ref{facet side} implies $\rmt_\tau M=\rmt_{\sigma} M$ or 
$\overline{\rmt}_\tau M=\overline{\rmt}_\sigma M$.
In the former case, $\rmt_\sigma M=\rmt_\tau M \in \TT_\theta$ 
holds for any $\theta \in \tau^\circ$,
so we get $\tau \not \subset \partial^+ \sigma$, a contradiction.
In the latter case, we get $\rmf_\sigma M=\rmf_\tau M \in \FF_\theta$
for any $\theta \in \tau^\circ$,
which gives another contradiction $\tau \not \subset \partial^- \sigma$.

(b)
We only show the assertion for $\partial^+ \sigma$.
By Proposition \ref{union of faces}(c), 
$\partial^+ \sigma$ is a union of faces of $\sigma$.
Let $\rho$ be a face of $\sigma$ contained in $\partial^+ \sigma$.
It suffices to show the existence of $\tau \in \Facet^+ \sigma$
such that $\rho \subset \tau$.

Take the face $F$ and the vertex $\{v\}$ of $\rmN(M)$
corresponding to $\rho$ and $\sigma$ 
in Theorem \ref{Sigma(M)=Sigma(N(M))}(b), respectively.
Then $v \in F$ holds by $\rho \subset \sigma$. 

Since $\rho \subset \sigma$, 
we get $\rmt_\rho M \subset \rmt_\sigma M$ 
by Lemma \ref{t_eta M/t_theta M}(b).
On the other hand, $\rho \subset \partial^+ \sigma$ gives
$\rmt_\sigma M \notin \TT_\theta$ for any $\theta \in \rho$.
These imply $\rmt_\rho M \subsetneq \rmt_\sigma M$.
This and Lemma \ref{N(M)_theta}(c) give
$\min F=[\rmt_\rho M]<[\rmt_\sigma M]=v$,
because $F=\rmN(M)_\rho$ and $\{v\}=\rmN(M)_\sigma$ by definition.

We have obtained $\min F<v \in F$, 
so Lemma \ref{side +-} tells us that 
there exists a vertex $v' \in F$ such that 
$[v',v]$ is an edge of $F$ and $v'<v$.
Taking $\sigma' \in \Sigma_n(M)$ corresponding to the vertex $\{v'\}$,
we get $\tau:=\sigma \cap \sigma' \in \Sigma_{n-1}(M)$
and $\tau \subset \partial^+ \sigma$ by Lemma \ref{facet side}.
Thus $\tau \in \Facet^+ \sigma$ holds.
Since $\tau=\sigma \cap \sigma'$ corresponds to $[v',v]$
by Proposition \ref{Face_d N(M) bij},
the inclusion $[v',v] \subset F$ implies $\rho \subset \tau$.
Therefore $\tau \in \Facet^+ \sigma$ and 
$\rho \subset \tau$ hold as desired.

(c)
is immediate from Lemma \ref{facet side}.
\end{proof}

Motivated by Theorem \ref{facet +-} and Lemma \ref{facet side},
we introduce the following notions.

\begin{definition}
Let $M \in \AA$.
We define paths in $\Sigma(M)$ and $\rmN(M)$ as follows.
\begin{enumerate}[\rm (a)]
\item
A \emph{path} in $\Sigma(M)$ is a sequence
$(\sigma_0,\sigma_1,\ldots,\sigma_\ell)$ such that
$\sigma_i \in \Sigma_n(M)$ for each $i \in \{0,\ldots,\ell\}$ and
$\sigma_{i-1} \cap \sigma_i \in \Sigma_{n-1}(M)$
for each $i \in \{1,\ldots,\ell\}$.
\item
A path $(\sigma_0,\sigma_1,\ldots,\sigma_\ell)$ in $\Sigma(M)$ 
is said to be \emph{increasing} if $\sigma_{i-1} \cap \sigma_i 
\in \Facet^- \sigma_{i-1} \cap \Facet^+ \sigma_i$
for each $i \in \{1,\ldots,\ell\}$.
\item
A \emph{path} in $\rmN(M)$ is a sequence
$(v_0,v_1,\ldots,v_\ell)$ such that
$\{v_i\}$ is a vertex of $\rmN(M)$ for each $i \in \{0,\ldots,\ell\}$ and
$[v_{i-1},v_i]$ is an edge of $\rmN(M)$ for each $i \in \{1,\ldots,\ell\}$.
\item
A path $(v_0,v_1,\ldots,v_\ell)$ in $\rmN(M)$ 
is said to be \emph{increasing} if
$v_{i-1}<v_i$ for each $i \in \{1,\ldots,\ell\}$.
\end{enumerate}
\end{definition}

Then Theorem \ref{Sigma(M)=Sigma(N(M))}(b) and Lemma \ref{facet side} 
immediately give the following.

\begin{proposition}
Let $M \in \AA$.
Then there exist bijections
\begin{align*}
\{ \text{paths in $\Sigma(M)$} \} 
&\leftrightarrow \{ \text{paths in $\rmN(M)$} \}
\end{align*}
induced by Theorem \ref{Sigma(M)=Sigma(N(M))}(b).
Under this bijection, a path in $\Sigma(M)$ is increasing
if and only if the corresponding path in $\rmN(M)$ is increasing.
\end{proposition}

\end{document}